\numberwithin{equation}{section}
\tikzset{
  LabelStyle/.style = { rectangle, rounded corners, draw,
                        minimum width = 2em, fill = white, font = \bfseries },
  VertexStyle/.append style = { inner sep=5pt,
                                font = \Large\bfseries},
  EdgeStyle/.append style = {bend left} }
\newtheorem{thm}{Theorem}[section]
\newtheorem{lem}[thm]{Lemma}
\newtheorem{cor}[thm]{Corollary}
\newtheorem{prop}[thm]{Proposition}
\theoremstyle{definition}
\newtheorem{defi}[thm]{Definition}
\theoremstyle{remark}
\newtheorem{rem}[thm]{Remark}
\newtheorem{ex}[thm]{Example}
\newcommand{\ra}{\rightarrow}
\newcommand\reallywidehat[1]{%
	\savestack{\tmpbox}{\stretchto{%
			\scaleto{%
				\scalerel*[\widthof{\ensuremath{#1}}]{\kern-.6pt\bigwedge\kern-.6pt}%
				{\rule[-\textheight/2]{1ex}{\textheight}}
			}{\textheight}%
		}{0.5ex}}%
	\stackon[1pt]{#1}{\tmpbox}%
}
\renewcommand{\P}{\mathbb{P}}
\newcommand{\C}{\mathbb{C}}
\newcommand{\on}[1]{\operatorname{#1}}
\renewcommand{\O}{\mathcal{O}}
\newcommand{\Z}{\mathbb{Z}}
\newcommand{\N}{\mathbb{N}}
\newcommand{\A}{\mathbb{A}}
\newcommand{\m}{\mathfrak{m}}
\newcommand{\p}{\mathfrak{p}}
\DeclareMathOperator{\spec}{Spec}
\DeclareMathOperator{\pic}{Pic}
\DeclareMathOperator{\sing}{Sing}
\DeclareMathOperator{\Frac}{Frac}
\DeclareMathOperator{\Hom}{Hom}
\DeclareMathOperator{\set}{Set}
\DeclareMathOperator{\ab}{Ab}
\DeclareMathOperator{\sch}{Sch}
\DeclareMathOperator{\aut}{Aut}
\DeclareMathOperator{\gal}{Gal}
\renewcommand{\on}[1]{\operatorname{#1}}
\newcommand{\sm}[1]{{{#1}_{\operatorname{sm}}}}
\newcommand{\schsm}[1]{{(\operatorname{Sch}/{#1})_{\operatorname{sm}}}}
\title{N\'eron models of Jacobians over bases \\ of arbitrary dimension}
\author{Thibault Poiret}
\address{Mathematisch Instituut Leiden, Niels Bohrweg 1, 2333 CA Leiden, The Netherlands}
\email{thibault.poiret5@gmail.com} 
\begin{document}



\maketitle

\begin{prelims}

\DisplayAbstractInEnglish

\bigskip

\DisplayKeyWords

\medskip

\DisplayMSCclass

\end{prelims}


\newpage

\setcounter{tocdepth}{1}

\tableofcontents


\section{Introduction}

\subsection{N\'eron models}
Given a scheme $S$ and a dense open $U\subset S$, proper and smooth schemes over $U$ often have no proper and smooth model over $S$. Even so, they may still have a canonical smooth $S$-model, the \emph{N\'eron model}, first introduced in~\cite{neron_article}. The N\'eron model of $X_U/U$ is defined as a smooth $S$-model satisfying the \emph{N\'eron mapping property}: for every smooth $S$-scheme $T$ and every $U$-morphism $\phi_U\colon T_U\rightarrow X_U$, there exists a unique morphism $\phi\colon T\ra N$ extending $\phi_U$. N\'eron models are unique up to a unique isomorphism and inherit a group structure from $X_U$ when it has one.

N\'eron proved in the original article~\cite{neron_article} that abelian varieties over a dense open of a Dedekind scheme always have N\'eron models. Recently, people have taken interest in constructing N\'eron models in different settings. It was proved by Qing Liu and Jilong Tong in~\cite{LiuTong} that smooth and proper curves of positive genus over a dense open of a Dedekind scheme always have N\'eron models. This does not apply to genus $0$: if $S$ is the spectrum of a discrete valuation ring with fraction field $K$, then $\mathbb P^1_K$ does not have a N\'eron model. Indeed, the N\'eron model, if it existed, would be the smooth and proper model $\mathbb P^1_S$. But $\mathbb P^1_S$   does not have the N\'eron mapping property since many automorphisms of $\mathbb P^1_K$ do not extend to automorphisms of $\mathbb P^1_S$ (\textit{e.g.}\ multiplication by a uniformizer).

Among the concrete applications of the theory of N\'eron models, we can cite the semi-stable reduction theorem (an abelian variety over the fraction field of a discrete valuation ring acquires semi-abelian reduction after a finite extension of the base field), the N\'eron--Ogg--Shafarevich criterion for good reduction of abelian varieties, the computation of canonical heights on Jacobians, as well as the linear and quadratic Chabauty methods to determine whether or not a list of rational points on a curve is exhaustive. For a geometric description of the quadratic Chabauty method, see~\cite{QuadChabauty}. Parallels can also be drawn to some problems in which N\'eron models do not explicitly intervene, such as extending the double ramification cycle on the moduli stack of smooth curves to the whole moduli stack of stable curves as in~\cite{HolmesExtendingDRC}. Here, one is interested in models in which one given section extends, instead of all sections simultaneously, but the two problems are closely related.

\subsection{Models of Jacobians}

Some constructions have already been made relating to N\'eron models of Jacobians over higher-dimensional bases. When $S$ is a regular scheme of arbitrary dimension and $X/S$ is a nodal curve\footnote{A nodal curve is a proper, flat, finitely presented morphism with geometric fibres of pure dimension $1$ and at worst ordinary double point singularities.} smooth over $U$, David Holmes exhibited in~\cite{Holmes} a combinatorial criterion on $X/S$ called \emph{alignment}, necessary for the Jacobian of $X_U$ to have a separated N\'eron model and sufficient when $X$ is regular. In~\cite{GiulioToricAdd}, Giulio Orecchia introduces the \emph{toric-additivity} criterion. Consider an abelian scheme $A/U$ with semi-abelian reduction $\mathcal{A}/S$, where $S$ is a regular base and $U$ the complement in $S$ of a strict normal crossings divisor. Toric-additivity is a condition on the Tate module of $A$. When $A$ is the generic Jacobian of an $S$-curve with a nodal model, toric-additivity is sufficient for a separated N\'eron model of $A$ to exist. It is also necessary up to some restrictions on the base characteristic. For general abelian varieties, it is proven in~\cite{GiulioMonodromyCriterion} that toric-additivity is still sufficient when $S$ has equicharacteristic $0$, and a partial converse holds; \textit{i.e.}\ the existence of a separated N\'eron model implies a weaker version of toric-additivity. When $S$ is a toroidal variety and $X/S$ a nodal curve, smooth over the complement $U$ of the boundary divisor, a construction of the N\'eron model of the Jacobian of $X_U$ is given in~\cite{HMOPModelsJacobians}, together with a moduli interpretation for it.

Let $g\geq 3$ be an integer. In~\cite{Caporaso2008Neron-models-an}, Lucia Caporaso constructs a "balanced Picard stack" $\mathcal P_g^d$, naturally mapping to the moduli stack $\overline{\mathcal M}_g$ of stable curves of genus $g$. This stack acts as a universal N\'eron model of the degree $d$ Jacobian for test curves corresponding to regular stable curves, \textit{i.e.}\ if $T \to \overline{\mathcal M}_g$ is a morphism from a trait\footnote{A trait is the spectrum of a discrete valuation ring.} such that the corresponding stable curve $X/T$ is regular, then $\mathcal P_g^d\times_{\overline{\mathcal M}_g} T$ is canonically isomorphic to the N\'eron model of the degree $d$ Jacobian of the generic fibre of $X$. The balanced Picard stack does not admit a group structure compatible with that of the Jacobian. In~\cite{HolmesUniversalJacobian}, Holmes exhibits an algebraic space $\widetilde{\mathcal M}_g$ over $\overline{\mathcal M}_g$ which is regular, in which $\mathcal M_g$ is dense, over which the universal Jacobian has a separated N\'eron model, and which is universal with respect to these properties.

\subsection{Notation}\label{notations}

We will adopt the following conventions:
\begin{itemize}
	\item If $f \colon X \to S$ is a morphism of algebraic spaces locally of finite type, we call \emph{smooth locus} of $f$, and denote by $(X/S)^{\mathrm{smooth}}$ (or $X^{\mathrm{smooth}}$ if there is no ambiguity), the open subspace of $X$ at which $f$ is smooth. Likewise, the \emph{\'etale locus} $(X/S)^{\mathrm{\acute{e}tale}}$ (or just $X^{\mathrm{\acute{e}tale}}$) of $f$ is the open subspace of $X$ at which $f$ is \'etale.
	\item If $f \colon X \to S$ is a morphism of schemes which is locally of finite presentation and has geometric fibres of pure dimension $1$, we call \emph{singular locus} of $f$, and denote by $\sing(X/S)$, the closed subscheme of $X$ cut out by the first Fitting ideal of the sheaf of relative $1$-forms of $X/S$. The set-theoretical complement of $\sing(X/S)$ in $X$ is precisely $(X/S)^{\mathrm{smooth}}$.
	\item Unless specified otherwise, if $A$ is a local ring, we write $\mathfrak{m}_A$ for its maximal ideal, $k_A$ for the its residue field and $\widehat{A}$ for its $\mathfrak{m}_A$-adic completion.
	\item When $M$ is a monoid, or sheaf of monoids, we write $\overline M$ for the quotient of $M$ by its units.
\end{itemize}

\subsection{Structure of the paper, main results}

In this article, we work with a nodal curve $X/S$, smooth over a dense open $U\subset S$, where $S$ is an excellent scheme satisfying certain conditions of local factoriality. We are interested in constructing a N\'eron model for the Jacobian of $X_U$.

In Section \ref{Generalities_about_NMs}, we start with a general discussion about the base change properties of N\'eron models, and we show the following result. 

\begin{cor}[\textit{cf.} Corollary~\ref{corollary:quotient_of_group_space_by_E^et_has_uniqueness_in_NMP}]
	Let $S$ be a scheme, $U\subset S$ a scheme-theoretically dense open subscheme, $N_U \to U$ a smooth, separated $U$-group algebraic space and $f\colon N \to S$ a smooth $S$-group model of $N_U$. Denote by $E$ the scheme-theoretic closure of the unit section in $N$ and by $E^{\mathrm{\acute{e}tale}}$  the \'etale locus of $E/S$. Then, for any smooth $S$-scheme $Y$, the sequence of abelian groups
	\[
	0 \to \Hom(Y,E^{\mathrm{\acute{e}tale}}) \to \Hom(Y,N) \to \Hom(Y_U,N_U)
	\]
	is exact. In particular, the quotient space $N/E^{\mathrm{\acute{e}tale}}$ is a smooth $S$-group model of $N_U$ with uniqueness in the N\'eron mapping property.
\end{cor}

In Section \ref{section_intro1}, we present some generalities about nodal curves, their local structure and their dual graphs.

In Section \ref{section_intro2}, we are interested in \emph{smooth-factorial} schemes, \textit{i.e.}\ those schemes $S$ such that any smooth $S$-scheme is locally factorial. We give conditions under which a prime divisor in a smooth-factorial scheme $S$ remains prime in $Y$ for various kinds of morphisms $Y \to S$.

In Section \ref{section_intro3}, we work with a section $\sigma \colon S \to X$. We introduce a combinatorial invariant, the \emph{type} of $\sigma$ at a point $s\in S$. We discuss some properties of this invariant, and we show that there are \'etale quasisections of every possible type through any singular point of $X/S$.

In Section \ref{section1}, we study blow-ups $X' \to X$ in the ideal sheaves of $S$-sections. We show that $X'/S$ is a nodal curve and that, locally on $S$, it is characterized by the type of the section. We compute the smoothing parameters of the nodes of $X'/S$ in terms of those of $X/S$. We show that, \'etale-locally on $S$, we can always obtain a model of $X_U$ satisfying strong conditions of local factoriality by repeatedly blowing up $X$ in $S$-sections. This can be seen as a higher-dimensional variant of the smoothening process of~\cite{NeronModels}. The reader familiar with logarithmic geometry can establish a parallel between these blow-ups and logarithmic modifications of a log curve inducing a given subdivision of its tropicalization (although one should be careful with this analogy; see Remark \ref{remark:strict_log_jac_vs_pic_aggregate}).

In Section \ref{sec6}, we construct the N\'eron model of the Jacobian. We describe how blowing up a nodal curve in a $S$-section affects its relative Picard scheme, giving us a "bigger" model of the Jacobian (\textit{cf.} Lemma \ref{proposition:refinements_induce_open_immersions_of_Pics}). Then, we show that one obtains a N\'eron model by appropriately quotienting a union of such models. The main result is as follows. 

\begin{thm}[\textit{cf.} Theorem \ref{theorem:NMs_Jac}]
Let $S$ be a smooth-factorial $($\textit{e.g.}\ regular$)$ and excellent scheme, and let $U\subset S$ be a dense open subscheme. Let $X_U/U$ be a smooth curve that admits a nodal model over $S$. Then:

\begin{enumerate}

\item The Jacobian $J=\pic^0_{X_U/U}$ of $X_U/U$ admits a N\'eron model $N$ over $S$.

\item For any nodal model $X/S$ of $X_U/U$, the map $\pic^{\on{tot}0}_{X/S}/E^{\mathrm{\acute{e}tale}} \to N$ extending the identity over $U$ is an open immersion, where $E$ is the scheme-theoretic closure of the unit section in $\pic^{\on{tot}0}_{X/S}$.

\item For any \'etale morphism $V \to S$ and nodal $V$-model $X$ of $X_{U\times_S V}$, if $\bar s \to V$ is a geometric point such that the singularities of $X_{\bar s}$ have prime thicknesses, then the canonical map $\pic^{\on{tot}0}_{X/V} \to N$ is surjective on $\spec(\O_{S,\bar s}^{\,\on{\acute{e}t}})$-points.

\end{enumerate}
\end{thm}

When $X/S$ comes from a vertical logarithmic curve over a logarithmically regular base (\textit{e.g.}\ $S$ is regular and $X$ smooth over the complement of a normal crossings divisor, or $S$ is a toroidal variety and $X$ smooth over the complement of the boundary divisor), the N\'eron model exists and has a moduli interpretation by~\cite[Corollary 6.13]{HMOPModelsJacobians}. The difference is that we require $S$ to be smooth-factorial but allow the discriminant locus to be arbitrary.

We give a local description of the N\'eron model in terms of Picard spaces in Remark \ref{remark:N_covered_by_some_Pic/E_after_etale_cover}. Finally, we give a simple way to determine whether or not the N\'eron model is separated. Following the ideas of~\cite{Holmes}, we say $X/S$ is \emph{strictly aligned} when the smoothing parameters of its singularities satisfy a certain combinatorial condition (\textit{cf.} Definition~\ref{c-strict alignment}), and we prove the following result.  

\begin{thm}[\textit{cf.} Theorem \ref{theorem:separatedness_of_NM_jac_iff_strictly_aligned}]\label{Theorem 1.3}
Let $S$ be a regular and excellent scheme, $U\subset S$ a dense open subscheme and $X/S$ a nodal curve, smooth over $U$. Denote by $J$ the Jacobian of $X_U/U$. Then, the $S$-N\'eron model $N$ of $J$ exhibited in Theorem~\ref{theorem:NMs_Jac} is separated if and only if $X/S$ is strictly aligned.
\end{thm}

With the notation of Theorem \ref{Theorem 1.3}, when $X/S$ is strictly aligned, the N\'eron model was already constructed in~\cite{Holmes} (see Proposition 3.6 of \emph{loc.~cit.}) under the additional assumption that $U$ is the complement of a normal crossings divisor in $S$. This additional assumption guarantees the existence of a global nodal model $X'/S$ whose total space is regular, in which case the N\'eron model of the Jacobian is the quotient of $\on{Pic}^{\on{tot} 0}_{X'/S}$ by the closure of its unit section. In our setting, the phenomenon illustrated by Example \ref{example:irred_not_etale_irred} prevents the existence of such an $X'$ in general, but a separated N\'eron model of the Jacobian still exists.

\subsection*{Acknowledgments}
The author would like to thank Qing Liu and the late Bas Edixhoven for their invaluable guidance, David Holmes and Giulio Orecchia for many fruitful discussions regarding N\'eron models of Jacobians, Samouil Molcho for helping get a logarithmic perspective on those models, the late Michel Raynaud for his help in understanding the theory of N\'eron models over Dedekind schemes, Ofer Gabber for his help with the proof of Lemma \ref{les anneaux locaux sont UFD} and Minsheon Shin for providing a reference for Lemma \ref{lemma:factoriality_descends_under_faithfully_flat_ring_ext} in answer to a question on \texttt{math.stackexchange.com}.

\section{Generalities about N\'eron models}\label{Generalities_about_NMs}

\subsection{Definitions}

\begin{defi}
	Let $S$ be a scheme and $U$ a scheme-theoretically dense open subscheme of $S$. Let $Z/U$ be a $U$-algebraic space. An \textit{$S$-model} of $Z$ (or just \textit{model} if there is no ambiguity) is an $S$-algebraic space $X$ together with an isomorphism $X_U=Z$. A \textit{morphism of $S$-models} between two models $X$ and $Y$ of $Z$ is an $S$-morphism $X\ra Y$ that commutes over $U$ with the given isomorphisms $X_U=Z$ and $Y_U=Z$.
\end{defi}

\begin{defi}\label{definition:neron_models}
	Let $S$ be a scheme and $U$ a scheme-theoretically dense open subscheme of $S$. Let $Z/U$ be a smooth $U$-scheme and $N$ an $S$-model of $Z$. We say that $S$ has the \emph{N\'eron mapping property} (resp.\ \emph{existence in the N\'eron mapping property}, \emph{uniqueness in the N\'eron mapping property}) if for each smooth $S$-algebraic space $Y$, the restriction map
	\[
	\Hom_S(Y,N)\ra\Hom_U(Y_U,Z)
	\]
	is bijective (resp.\ surjective, injective).
        If $N$ is $S$-smooth and has the N\'eron mapping property, we say that it is an \emph{$S$-N\'eron model} of $Z$ (or just \emph{N\'eron model} if there is no ambiguity).
\end{defi}

\begin{rem}
	Various authors require N\'eron models to be separated and of finite type over the base. N\'eron models without a quasicompactness condition are sometimes referred to as N\'eron-lft models, where "lft" stands for "locally of finite type". We justify the definition above by observing that our N\'eron models are still unique up to a unique isomorphism by virtue of the universal property: we can always discuss their separatedness or quasicompactness \textit{a posteriori}.
\end{rem}

\begin{rem}
	Let $S$, $U$, $Z$ be as in Definition \ref{definition:neron_models}, and let $N$ be a smooth, separated $S$-model of $Z$. Consider a smooth $S$-algebraic space $Y/S$ and two morphisms $f_1,f_2\colon Y\ra N$ that coincide over $U$. The separatedness of $N/S$ implies that the equalizer of $f_1$ and $f_2$ is a closed subspace of $Y$ containing $Y_U$, and the latter is scheme-theoretically dense in $Y$ by~\cite[th\'eor\`eme 11.10.5]{EGA4.3}. Thus, $N$ automatically has uniqueness in the N\'eron mapping property.
\end{rem}

\begin{rem}\label{remark:NMs_represent_the_restriction_functor_on_small_smooth_site}
	By descent, the definition of N\'eron models is unchanged if we only require the N\'eron mapping property to hold when $Y/S$ is a scheme. Therefore, when we ask if a N\'eron model exists for $X_U/U$, we are asking if the functor $Y \mapsto X_U(Y_U)$ on the small smooth site of $S$ is representable by a smooth algebraic space. Similarly, we would get a looser (but still universal) definition of N\'eron models by asking for them to be smooth algebraic stacks.
\end{rem}

\subsection{Base change and descent properties}

\begin{prop}[The formation of N\'eron models is compatible with smooth base change]\label{changement de base lisse} 
	Consider a smooth morphism $S'\ra S$, a scheme-theoretically dense open $U\subset S$ and a smooth $S$-model $X$ of $X_U$ with uniqueness in the N\'eron mapping property. Then, the base change $X_{S'}$ is a smooth $S'$-model of $X_{U'}$ with uniqueness in the N\'eron mapping property. If $X$ is the N\'eron model of $X_U$, then $X'$ is the N\'eron model of $X_{U'}$.
\end{prop}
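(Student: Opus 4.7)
The proof reduces essentially to a pair of adjunction identities once one checks that the smooth base change takes the schematically dense open to a schematically dense open. First I would verify that $U' := U \times_S S'$ is scheme-theoretically dense in $S'$. Since $S' \to S$ is smooth, it is flat, and scheme-theoretic density of open subschemes is preserved under flat base change (cf.\ \cite[Th\'eor\`eme 11.10.5]{EGA4.3}). Smoothness of $X_{S'}/S'$ is automatic from stability of smoothness under base change, so $X_{S'}$ is a smooth $S'$-model of $X_{U'}$.

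For the uniqueness claim, let $Y'$ be a smooth $S'$-scheme. By composing with $S' \to S$, the scheme $Y'$ is also smooth over $S$, and its open subscheme $Y'_{U'}$ coincides with $Y'_U$ (the preimage of $U$ under $Y' \to S$). The universal property of fiber products gives natural bijections
\[
\Hom_{S'}(Y', X_{S'}) = \Hom_S(Y', X), \qquad \Hom_{U'}(Y'_{U'}, X_{U'}) = \Hom_U(Y'_U, X_U),
\]
under which the restriction map on the left identifies with the restriction map on the right. Uniqueness (resp.\ bijectivity) of the latter is precisely the hypothesis on $X/S$, so uniqueness (resp.\ the full N\'eron mapping property) transfers to $X_{S'}/S'$. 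This simultaneously handles both assertions of the proposition.

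The only real subtlety to confirm is that the identification $Y'_{U'} = Y'_U$ is compatible with the restriction map of functors — i.e.\ that a morphism $\phi_{U'}\colon Y'_{U'} \to X_{U'}$ over $U'$ corresponds, under the bijection $\Hom_{U'}(Y'_{U'}, X_{U'}) = \Hom_U(Y'_U, X_U)$, to its composition with the projection $X_{U'} \to X_U$, and that the extension $Y' \to X$ obtained by the N\'eron mapping property for $X/S$ is compatible with the structural morphism $Y' \to S'$ so as to yield a genuine $S'$-morphism $Y' \to X_{S'}$. This is a bookkeeping check using the Cartesian diagrams defining $X_{S'}$ and $X_{U'}$; there is no geometric obstruction. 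I do not expect any step here to be a serious obstacle, since everything is formal once the density of $U'$ in $S'$ is in hand.
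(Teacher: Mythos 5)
Your proof is correct and follows essentially the same route as the paper's: both reduce everything to the adjunction $\Hom_{S'}(Y', X_{S'}) \cong \Hom_S(Y', X)$ for a smooth $S'$-scheme $Y'$ (which is then also a smooth $S$-scheme), together with the observation that $Y'_{U'} = Y'_U$ and that $U'$ is schematically dense in $S'$ by flat base change. The paper phrases the uniqueness step as "a morphism $Y'\to X'$ is determined by the two projections," but this is the same bijection you invoke explicitly.
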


\begin{proof}
	First, note that $X_{S'}/S'$ is smooth since $X/S$ is and that $U'$ is scheme-theoretically dense in $S'$ by~\cite[th\'eor\`eme 11.10.5]{EGA4.3}. Thus, we only need to check that $X'/S'$ has uniqueness in the N\'eron mapping property and that it has existence if $X/S$ does.
	
	Let $Y'$ be a smooth $S'$-scheme. A morphism $Y' \to X'$ is uniquely determined by the two projections $Y' \to S'$ and $Y' \to X$. Since $Y' \to S$ is smooth, it follows that $X'$ has uniqueness in the N\'eron mapping property. Now, suppose that $X$ is the N\'eron model of $X_U$, and consider a $U'$-morphism $u': Y'_{U'}\ra X_{U'}$. Composing with the projection: $X_{U'}\ra X_U$, we get a $U$-morphism $Y'_{U'}\ra X_U$, which extends to a unique $S$-morphism $Y'\ra X$ by the N\'eron mapping property since $Y'/S$ is smooth. Then the induced morphism $Y'\ra X'$ extends $u'$.
\end{proof}

\begin{cor}\label{corollaire le NM passe aux limites d'algebres lisses}
	If $S'/S$ is a cofiltered limit of smooth $S$-schemes $($indexed by a cofiltered partially ordered set, \textit{e.g.}\ a localization, a henselization when $S$ is local,\ldots$)$ and $X$ is the $S$-N\'eron model of $X_U$, then $X_{S'}$ is the $S'$-N\'eron model of $X_{U'}$.
\end{cor}

\begin{prop}[N\'eron models descend along smooth covers]\label{proposition descente lisse des NM}
	Let $S$ be a scheme and $U$ a scheme-theoretically dense open of $S$. Let $S'\ra S$ be a smooth surjective morphism and $U'=U\times_S S'$. Let $X_U$ be a smooth $U$-algebraic space, and suppose $X_{U'}$ has a $S'$-N\'eron model $X'$. Then $X'$ comes via base change from an $S$-space $X$, which is the N\'eron model of $X_U$.
\end{prop}

\begin{proof}
Denote by $p_1,p_2$ the two projections $S'':=S'\times_S S'\ra S'$. They are smooth morphisms, so by Proposition~\ref{changement de base lisse}, there is a canonical isomorphism $p_1^*X'=p_2^*X'$ satisfying the cocycle condition. By the effectiveness of fppf descent for algebraic spaces (\textit{cf.} \cite[\href{https://stacks.math.columbia.edu/tag/0ADV}{Tag 0ADV}]{stacks-project}), $X'$ comes via base change from an $S$-algebraic space $X/S$. The morphism $X\ra S$ is smooth since $X'/S'$ is (smoothness is even fpqc local on the base; see~\cite[\href{http://stacks.math.columbia.edu/tag/02VL}{Tag 02VL}]{stacks-project}). Therefore, we only need to prove that $X/S$ has the N\'eron mapping property. Let $Y$ be a smooth $S$-algebraic space and $f_U$ be in $\Hom(Y_U,X_U)$. The corresponding map $Y'_U \to X'_U$ extends uniquely to some $f'\colon Y'\ra X'$. The two pull-backs of $f'$ to $S'':=S'\times_S S'$ coincide over~$U$; hence they coincide (since $X_{S''}$ has the N\'eron mapping property by Proposition \ref{changement de base lisse}). Hence $f'$ comes from a morphism $f \colon Y \to X$, which is the only one extending $f_U$.
\end{proof}

\subsection{Group models with injective restriction map}

In this subsection, we show that any smooth group model of a group algebraic space has a biggest quotient with uniqueness in the N\'eron mapping property, namely the quotient by the \'etale locus over the base of its unit section (\textit{cf.} Corollary \ref{corollary:quotient_of_group_space_by_E^et_has_uniqueness_in_NMP}). Given a scheme $S$, we will write $\sm S$ (resp.\ $\schsm S$) for the small smooth site (resp.\ big smooth site) of $S$.

We write $\ab$ for the category of abelian groups. We will use without further mention the fact that a smooth $S$-group algebraic space is determined up to a unique isomorphism by the corresponding functor $(\sm S)^{\on{op}} \to \ab$ (by combining the Yoneda lemma with a descent argument). Recall from Subsection~\ref{notations} that if $f\colon E \to S$ is a morphism, then $(E/S)^{\mathrm{\acute{e}tale}}$ (or just $E^{\mathrm{\acute{e}tale}}$) denotes the \'etale locus of $f$.

\begin{lem}[\textit{cf.} \protect{\cite[Lemma~5.18]{Holmes}}]\label{lemma:sections_of_E_factor_through_E^et}
	Let $S$ be a scheme, $U\subset S$ a scheme-theoretically dense open and $f\colon E \to S$ an $S$-algebraic space. Suppose that $f$ restricts to an isomorphism over $U$ and that $U\times_S E$ is scheme-theoretically dense in $E$. Then any section of $f$ factors through $E^{\mathrm{\acute{e}tale}}$.
\end{lem}

\begin{proof}
	The claim is \'etale-local on $S$ and $E$, so we may assume that $f$ is a morphism of affine schemes. In particular, $f$ is separated. Then $S \to E$ is a closed immersion through which $U$ factors, hence an isomorphism. \textit{A fortiori}, $f$ is \'etale.
\end{proof}

\begin{cor}\label{corollary:quotient_of_group_space_by_E^et_has_uniqueness_in_NMP}
	Let $S$ be a scheme, $U\subset S$ a scheme-theoretically dense open subscheme, $N_U \to U$ a smooth, separated $U$-group algebraic space and $f\colon N \to S$ a smooth $S$-group model of $N_U$. Denote by $E$ the scheme-theoretic closure of the unit section in $N$. Then, for any smooth $S$-scheme $Y$, the sequence of abelian groups
	\[
	0 \to \Hom(Y,E^{\mathrm{\acute{e}tale}}) \to \Hom(Y,N) \to \Hom(Y_U,N_U)
	\]
	is exact. In particular, the quotient space $N/E^{\mathrm{\acute{e}tale}}$ is a smooth $S$-group model of $N_U$ with uniqueness in the N\'eron mapping property.
\end{cor}

\begin{rem}
	In the setting of Corollary \ref{corollary:quotient_of_group_space_by_E^et_has_uniqueness_in_NMP}, if $N$ has existence in the N\'eron mapping property, it follows that $N/E^{\mathrm{\acute{e}tale}}$ is the N\'eron model of $N_U$.
\end{rem}

\section{Nodal curves and dual graphs}\label{section_intro1}

\subsection{First definitions}

The results of this subsection mostly either are well-known facts about nodal curves or come from~\cite{Holmes}. When the proofs are short enough, we reproduce them for convenience.

\begin{defi}
A \emph{graph} $G$ is an ordered pair of finite sets $(V,E)$, together with a map $f:E\ra (V\times V)/\mathcal{S}_2$. We denote by $V$ the set of vertices of $G$ and by $E$ its set of edges. We think of $f$ as the map sending an edge to its endpoints. We call \emph{loop} any edge in the preimage of the diagonal of $(V\times V)/\mathcal{S}_2$. We will often omit $f$ in the notation and write $G=(V,E)$.

Let $v,v'$ be two vertices of $G$. A \emph{path between $v$ and $v'$ in $G$} is a finite sequence $(e_1,\ldots,e_n)$ of edges such that there are vertices $v_0=v,v_1,\ldots,v_n=v'$ satisfying $f(e_i)=(v_{i-1},v_i)$ for all $1\leq i\leq n$. We denote by $n$ the \emph{length} of the path. A \emph{chain} is a path as above, with positive $n$, where the only repetition allowed in the vertices $(v_i)_{0\leq i\leq n}$ is $v_0=v_n$. A \emph{cycle} is a chain from a vertex to itself. The cycles of length $1$ of $G$ are its loops.

Let $M$ be a monoid. A \emph{labelled graph over $M$} (or \emph{labelled graph} if there is no ambiguity) is the data of a graph $G=(V,E)$ and a map $l \colon E\ra M\backslash\{0\}$, called \emph{edge-labelling}. The image of an edge by this map is called the \emph{label} of that edge.
\end{defi}

\begin{defi}
Let $X$ be an algebraic space. We call \emph{geometric point} of $X$ a morphism $\spec\bar{k}\ra~X$ where the image of $\spec\bar{k}$ is a point with residue field $k$, and $\bar{k}$ is a separable closure of $k$. 
Given two geometric points $s,t$ of $X$, we say that $t$ is an \emph{\'etale generization} of $s$ (or that $s$ is an \emph{\'etale specialization} of $t$) when the image of $t\ra X$ is a generization of the image of $s \to X$. We will often omit the word "\'etale" and just call them specializations and generizations.
\end{defi}

\begin{defi}\label{definition des courbes nodals et points etales}
A \emph{curve} over a separably closed field $k$ is a proper, finitely presented morphism $X\ra\spec k$ with $X$ of pure dimension $1$. It is called \emph{nodal} if it is connected and for every point $x$ of $X$, either $X/k$ is smooth at $x$, or $x$ is an ordinary double point (\textit{i.e.}\ the completed local ring of $X$ at $x$ is isomorphic to $k[[u,v]]/(uv)$).

A \emph{curve} (resp.\ \emph{nodal curve}) over a scheme $S$ is a proper, flat, finitely presented morphism $X\ra S$ whose geometric fibres are curves (resp.\ nodal curves).
\end{defi}

\begin{rem}
By~\cite[Proposition 10.3.7]{Liu}, our definition of nodal curves is unchanged if one defines geometric points with algebraic closures instead of separable closures.
\end{rem}

\begin{defi}
Let $S$ be a scheme, $s$ a point of $S$ and $\bar s$ a geometric point mapping to $s$. We will call \emph{\'etale neighbourhood of $\bar s$ in $S$} the data of an \'etale morphism of schemes $V \to S$, a point $v$ of $V$ and a factorization $\bar s \to v \to s$ of $\bar s \to s$. \'Etale neighbourhoods naturally form a codirected system, and we call \emph{\'etale stalk of $S$ at $s$} the limit of this system. The \'etale stalk of $S$ at $s$ is an affine scheme, and we call \emph{\'etale local ring at $s$}, and denote by $\O_{S,s}^{\,\on{\acute{e}t}}$, its ring of global sections. We will sometimes keep the choice of geometric point $\bar s$ implicit and abusively write $(V,v)$, or even $V$, for an \'etale neighbourhood of $s$ in $S$.
\end{defi}

\begin{rem}
The \'etale local ring of $S$ at $\bar s$ is the strict henselization of the local ring (in the Zariski topology) $\O_{S,s}$ determined by the separable closure $k(s) \to k(\bar s)$.
\end{rem}

\subsection{The local structure}

\begin{prop}\label{structure locale des courbes nodals}
Let $S$ be a locally Noetherian scheme and $X/S$ be a nodal curve. Let $s$ be a geometric point of $S$ and $x$ be a non-smooth point of $X_s$. There exists a unique principal ideal $T=(\Delta)$ of the \'etale local ring $\O^{\,\on{\acute{e}t}}_{S,s}$ such that
\[
\widehat{\O^{\,\on{\acute{e}t}}_{X,x}}\simeq\widehat{\O^{\on{\,\acute{e}t}}_{S,s}}[[u,v]]/(uv-\Delta).
\]
We call $T$ the \emph{thickness} of $x$. It can be seen as an element of the $($multiplicative$)$ monoid $\overline{\O^{\,\on{\acute{e}t}}_{S,s}} = \O^{\,\on{\acute{e}t}}_{S,s}/(\O^{\,\on{\acute{e}t}}_{S,s})^\times$.
\end{prop}

\begin{proof}
	This is~\cite[Proposition 2.5]{Holmes}.
\end{proof}

\begin{rem}
The element $\Delta$ of $\O^{\,\on{\acute{e}t}}_{S,s}$ is a nonzerodivisor if and only if $X/S$ is generically smooth in a neighbourhood of $x$.
\end{rem}

\subsection{The dual graph at a geometric point}

We discuss an important combinatorial object, the \emph{dual graph} of a nodal curve at a geometric point. Throughout the literature, one can find many definitions of dual graphs (or tropicalizations of logarithmic curves), depending on how much information the authors need this object to carry. With a definition slightly heavier than ours, one can construct them functorially in families (see~\cite[Section 3]{HMOPModelsJacobians}).

\begin{defi}\label{definition:dual_graph}
	Let $X/S$ be a nodal curve with $S$ locally Noetherian, and let $s$ be a geometric point of $S$. We define the \emph{dual graph} of $X$ at $s$ as follows:
	\begin{itemize}
	\item Its vertices are the irreducible components of $X_s$. 
	\item It has an edge for every singular point $x$ of $X_s$, whose endpoints are the two (possibly equal) irreducible components containing the two preimages of $x$ in the normalization of $X_s$. 
	\item It has an edge-labelling by the multiplicative monoid $\overline{\O^{\,\on{\acute{e}t}}_{S,s}}$, mapping a singular point to its thickness.
	\end{itemize}
When $S$ is strictly local, we will sometimes refer to the dual graph of $X$ at the closed point as simply "the dual graph of $X$".
\end{defi}

\begin{prop}\label{graphes duaux et changement de base}
	Let $S'\ra S$ be a morphism between locally Noetherian schemes, $X/S$ a nodal curve, $s$ a geometric point of $S$ and $s'$ a geometric point of $S'$ mapping to a generization of $s$. Let $X'$ be the base change of $X$ to $S'$. Let $\Gamma$ $($resp.\ $\Gamma')$ be the dual graph of $X$ at $s$ $($resp.\ of $X'$ at $s')$.
	
	Then, $\Gamma'$ is obtained from $\Gamma$ by contracting the edges whose labels map to $1$ in $M:=\overline{\O}^{\,\on{\acute{e}t}}_{S',s'}$ and replacing the labels of the other edges by their images in $M$.
	
	In particular, if $s'$ has image $s$, then $\Gamma$ and $\Gamma'$ are isomorphic as non-labelled graphs, and the labels of $\,\Gamma'$ are the images in $M$ of those of $\,\Gamma$.
\end{prop}

\begin{proof}
	This is~\cite[Remark 2.12]{Holmes}. We re-prove it here.
	
	We can reduce to $S=\spec R$ and $S'=\spec R'$ affine and strictly local,\footnote{In other words, $S$ and $S'$ are isomorphic to spectra of strictly henselian local rings.} with respective closed points $s$ and~$s'$.
	
	Let $x$ be a singular point of $X$ with image $s$ and $\Delta\in R$ be a lift of its thickness. Then we can choose an isomorphism $\widehat{\O_{X,x}}=\widehat{R}[[u,v]]/(uv-\Delta)$.

	This yields $\widehat{\O_{X,x}}\otimes_R R'=\widehat{R}\otimes_R R'[[u,v]]/(uv-\Delta)$. The ring $\widehat{R}\otimes_R R'$ is local, with completion $\widehat{R'}$ with respect to the maximal ideal: as desired, if $\Delta$ is invertible in $R'$, then $X'$ is smooth above a neighbourhood of $x$, and otherwise, $X'$ has exactly one singular closed point of image $x$, with thickness $\Delta R'$.
\end{proof}

\begin{ex}\label{exemple specialization maps}
With the same notation as above, in the case $S=S'$, we have defined the \emph{specialization maps} of dual graphs: if $s,\xi$ are geometric points of $S$ with $s$ specializing $\xi$, we have a canonical map from the dual graph at $s$ to the dual graph at $\xi$, contracting an edge if and only if its label becomes the trivial ideal in~$\O_{S,\xi}^{\,\on{\acute{e}t}}$.
\end{ex}

It can be somewhat inconvenient to always have to look at geometric points. We can often avoid it as in~\cite{HolmesUniversalJacobian}, by reducing to a case in which the dual graphs already make sense without working \'etale-locally on the base.

\subsection{Quasisplitness, dual graphs at non-geometric points}

\begin{defi}[see~\protect{\cite[Definition~4.1]{HolmesUniversalJacobian}}]
	We say that a nodal curve $X \to S$ is \emph{quasisplit} if the two following conditions are met:
\begin{enumerate}
\item For any point $s\in S$ and any irreducible component $E$ of $X_s$, there is a smooth section $S \to (X/S)^{\mathrm{smooth}}$ intersecting $E$.
\item The singular locus $\sing(X/S)\ra S$ is of the form
	\[
	\coprod\limits_{i\in I}F_i\ra S,
	\]
where the $F_i\to S$ are closed immersions.
\end{enumerate}	
\end{defi}

\begin{ex}
Consider the real conic
\[
X=\operatorname{Proj}(\mathbb R[x,y,z]/(x^2+y^2)).
\]
It is an irreducible nodal curve over $\spec \mathbb R$, but the base change $X_{\mathbb C}$ has two irreducible components: $X$ is not quasisplit over $\spec \mathbb R$.

On the other hand, consider the real projective curve
\[
Y=\operatorname{Proj}(\mathbb R[x,y,z]/(x^3+xy^2+xz^2)).
\]
It has two irreducible components (respectively cut out by $x$ and by $x^2+y^2+z^2$), both geometrically irreducible. The singular locus of $Y/\mathbb R$ consists of two $\C$-rational points, with projective coordinates $(0:i:1)$ and $(0:-i:1)$, at which $Y_{\mathbb C}$ is nodal. Since $\sing(Y/\mathbb R)$ is not a disjoint union of $\mathbb R$-rational points, $Y$ is not quasisplit over $\spec \mathbb R$. However, both $X$ and $Y$ become quasisplit after base change to $\spec\C$.
\end{ex}

\begin{rem}
Our definition of quasisplitness is slightly more restrictive than that of~\cite{HolmesUniversalJacobian}.
\end{rem}

\begin{rem}\label{remark:Zariski_dual_graphs_of_QS_curves}
Let $X/S$ be a quasisplit nodal curve, $s$ a point of $S$ and $\bar s \to S$ a geometric point above $s$. The irreducible components of $X_{\bar s}$ are in canonical bijection with those of $X_s$ by the first condition defining quasisplitness, and the thicknesses of $X$ at $\bar s$ come from principal ideals of the local ring (in the Zariski topology) $\O_{S,s}$ by the second condition. Therefore, we can define without ambiguity the dual graph of $X$ at $s$: its vertices are the irreducible components of $X_s$, its edges are the non-smooth points $x\in X_s$, with endpoints the two components meeting at $x$, and the label of $x$ is the preimage in $\overline{\O_{S,s}}$ of the thickness of some (equivalently, any) point above $x$ in a geometric fibre of $X/S$.

From now on, we will call the label of $x$ defined as above 
the thickness of $X$ at $x$ and talk freely about the dual graphs of quasisplit curves at (not necessarily geometric) field-valued points of $S$. This can clash with Definition \ref{definition:dual_graph} when $x$ is a singular point of a geometric fibre 
of $X/S$. Unless specified otherwise, when there is an ambiguity, we will always privilege Definition \ref{definition:dual_graph}.
\end{rem}

\begin{lem}
Quasisplit curves are stable under arbitrary base change.
\end{lem}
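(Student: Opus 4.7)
The plan is to verify the two defining conditions of quasisplitness for $X_T/T$, where $T \to S$ is an arbitrary morphism. That $X_T/T$ is again a nodal curve is immediate, as the nodal condition is fiberwise and stable under base change. For the second condition, the first Fitting ideal of $\Omega^1$ commutes with pullback, so $\sing(X_T/T) = \sing(X/S) \times_S T$. Writing $\sing(X/S) = \coprod_i F_i$ with each $F_i \to S$ a closed immersion, base change yields $\sing(X_T/T) = \coprod_i (F_i \times_S T)$, a disjoint union of closed immersions into $T$.

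The substantive step is the first condition. The key claim is that if $X/S$ is quasisplit, then for every $s \in S$, each irreducible component $E$ of $X_s$ (endowed with its reduced structure) is geometrically irreducible over $k(s)$. Granting this, fix $t \in T$ with image $s$ in $S$: the irreducible components of $(X_T)_t = X_s \otimes_{k(s)} k(t)$ are precisely the base changes $E \otimes_{k(s)} k(t)$ of the components $E$ of $X_s$. Any section $\sigma\colon S \to (X/S)^{sm}$ meeting $E$ pulls back to a section $\sigma_T\colon T \to (X_T/T)^{sm}$, using $(X_T/T)^{sm} = (X/S)^{sm} \times_S T$, and $\sigma_T(t)$ then lies in $E \otimes_{k(s)} k(t)$. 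This provides the required smooth section through every irreducible component of every fiber of $X_T/T$.

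To prove the claim, I exploit the smooth $k(s)$-rational point $\sigma(s) \in E$ provided by a section through $E$. The reduced subscheme $E$ is integral and proper over $k(s)$, so its field of constants $k' := H^0(E, \mathcal{O}_E)$ is a finite extension of $k(s)$; the structural map $E \to \spec k(s)$ factors through $\spec k'$, and $E$ is geometrically irreducible over $k'$ because $k'$ is algebraically closed in the function field $k(E)$. The $k(s)$-rational point $\sigma(s)$ furnishes a $k(s)$-algebra map $k' \to k(s)$, which together with the inclusion $k(s) \hookrightarrow k'$ forces $k' = k(s)$. This Stein-factorization argument is where the real content of the lemma lies; the rest of the proof is routine base-change formalism.
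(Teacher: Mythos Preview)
Your overall strategy is correct and indeed fills in the details that the paper leaves implicit (the paper's proof is a single line asserting that both conditions are preserved by base change). You correctly identify that the substance lies in showing each irreducible component $E$ of $X_s$ is geometrically irreducible, so that components do not split under the field extension $k(t)/k(s)$.

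However, your Stein-factorization argument has a gap. You assert that $k' := H^0(E,\mathcal O_E)$ is algebraically closed in $k(E)$, but this can fail when $E$ is not normal. The paper's own example is a witness: for $E = \operatorname{Proj}(\mathbb R[x,y,z]/(x^2+y^2))$ one has $H^0(E,\mathcal O_E)=\mathbb R$ (since $E_{\mathbb C}$ is two lines meeting at a point, hence connected), yet $k(E)$ contains $y/x$ with $(y/x)^2=-1$, so $\mathbb C\subset k(E)$. Thus $k'=\mathbb R$ is \emph{not} algebraically closed in $k(E)$, and indeed $E$ is not geometrically irreducible. In your argument you deduce $k'=k(s)$ from the rational point, but as this example shows, $k'=k(s)$ alone does not force geometric irreducibility.

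The fix is to use the \emph{smoothness} of $\sigma(s)$, which you have not yet exploited. One clean route: the normalization $\tilde E\to E$ is an isomorphism near the smooth point $\sigma(s)$, so $\tilde E$ inherits a $k(s)$-rational point; since $\tilde E$ is normal, $H^0(\tilde E,\mathcal O_{\tilde E})$ genuinely is the algebraic closure of $k(s)$ in $k(\tilde E)=k(E)$, and your argument then goes through for $\tilde E$, whence also for $E$. Alternatively, argue directly: over a separable closure $k^{\mathrm{sep}}$ of $k(s)$, the Galois group acts transitively on the components of $E_{k^{\mathrm{sep}}}$ (since $E$ is irreducible), fixes the unique point over the $k(s)$-rational $\sigma(s)$, and that point lies on a unique component by smoothness; transitivity then forces a single component.
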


\begin{proof}
The two conditions forming quasisplitness are preserved by base change.
\end{proof}

\begin{lem}\label{lemma:quasisplit_if_base_strictly_local}
Let $S$ be a Noetherian strictly local scheme and $X/S$ a nodal curve. Then $X/S$ is quasisplit.
\end{lem}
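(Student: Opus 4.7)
The plan is to verify the two defining conditions of quasisplitness in turn. Throughout, let $A$ denote the strictly henselian local ring with $S = \spec A$, closed point $s_0$, and (separably closed) residue field $k$.

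For condition (ii), first observe that $\sing(X/S) \to S$ is proper (as a closed subscheme of the proper $X/S$) with $0$-dimensional fibers (the singular locus of a nodal curve is finite), and hence is finite. Since $A$ is strictly henselian, every finite $S$-scheme decomposes as a coproduct $\coprod_i \spec(B_i)$ with each $B_i$ finite local over $A$ and $\spec(B_i)$ supported at a single node $x_i$ of $X_{s_0}$. By Proposition~\ref{structure locale des courbes nodals}, $\widehat{\O^{\on{et}}_{X,x_i}} \cong \widehat{A}[[u,v]]/(uv - \Delta_i)$ for some lift $\Delta_i \in A$ of the thickness at $x_i$, and a direct computation of the first Fitting ideal of $\Omega^1_{X/S}$ (locally generated by $du,dv$ with relation $v\,du + u\,dv = 0$) yields $\widehat{B_i} \cong \widehat{A}/(\Delta_i)$. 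Since $B_i$ is finite over $A$ and $A \to \widehat{A}$ is faithfully flat, the natural map $A/(\Delta_i) \to B_i$ --- well-defined because $\Delta_i$ maps to $0$ in $B_i \hookrightarrow \widehat{B_i}$ --- becomes an isomorphism after applying $\otimes_A \widehat{A}$, and is therefore already an isomorphism. Thus each $\spec(B_i) \hookrightarrow S$ is a closed immersion.

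For condition (i), the strategy is to construct a section $S \to (X/S)^{sm}$ through each component of $X_{s_0}$ and then verify that these sections already meet every component of every fiber. Given a component $E_0 \subset X_{s_0}$, the smooth locus $E_0 \cap (X/S)^{sm}_{s_0}$ is a nonempty open of a smooth $k$-curve, and since $k$ is separably closed every closed point of such a curve is $k$-rational; pick one, call it $y$. Using the lifting property of smooth morphisms over strictly henselian local bases (applied to $(X/S)^{sm} \to S$), lift $y$ to a section $\sigma\colon S \to (X/S)^{sm}$ with $\sigma(s_0) = y$. For an arbitrary $s \in S$ and component $E \subset X_s$, Proposition~\ref{graphes duaux et changement de base} (which says that the dual graph at a geometric point $\bar s$ above $s$ is obtained by contracting edges of $\Gamma_{s_0}$) furnishes a component $E_0 \subset X_{s_0}$ mapping to a geometric component above $E$. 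I claim the section $\sigma$ built through a smooth point of this $E_0$ satisfies $\sigma(s) \in E$: if instead $\sigma(s)$ lay on some other component $E'$ of $X_s$, continuity would give $\sigma(s_0) \in \overline{E'}^X \cap X_{s_0}$ (where $\overline{E'}^X$ denotes the closure of $E'$ in $X$), which is a union of components of $X_{s_0}$ all contracting to $E'$; but $\sigma(s_0)$ is a smooth point lying in a unique component --- namely $E_0$ --- which would then contract to both $E$ and $E'$, forcing $E = E'$.

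The main obstacle is this specialization argument, specifically the identification of $\overline{E'}^X \cap X_{s_0}$ with the union of components of $X_{s_0}$ that contract to $E'$ under the dual-graph contraction. This is a topological consequence of the flatness of $X/S$ together with the local structure of Proposition~\ref{structure locale des courbes nodals}, and can be verified by working étale-locally at smooth points of $X_{s_0}$.
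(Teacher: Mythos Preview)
Your proof is correct and follows the same two-part strategy as the paper: henselian lifting of rational smooth points of the closed fiber for condition~(i), and Proposition~\ref{structure locale des courbes nodals} plus the structure of finite algebras over a henselian base for condition~(ii). The paper's proof is two sentences and leaves the passage from ``sections through smooth closed points of $X_{s_0}$'' to ``sections meeting every component of every fiber'' entirely implicit; your specialization argument via dual-graph contraction and your flagging of the identification of $\overline{E'}\cap X_{s_0}$ as the nontrivial step are genuine elaborations rather than a different route. That step can be made precise either by the upper-semicontinuity of fiber dimension for $\overline{E'}\to\overline{\{s\}}$ together with the local computation you indicate, or more slickly by factoring $(X/S)^{\mathrm{sm}}\to S$ through its \'etale space of fiberwise connected components and observing that $\sigma$ induces a section of the latter.
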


\begin{proof}
There is a section through every closed point in the smooth locus of $X/S$, so in particular there is a smooth section through every irreducible component of every fibre. Proposition \ref{structure locale des courbes nodals} implies that the map $\sing(X/S)\ra S$ is a disjoint union of closed immersions.
\end{proof}

\begin{cor}\label{corollary:curves_are_QS_over_some_etale_cover}
Let $S$ be a locally Noetherian scheme and $X/S$ a nodal curve. Then there is an \'etale cover $V \to S$ such that $X_V/V$ is quasisplit.
\end{cor}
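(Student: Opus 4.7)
My plan is to deduce this corollary from Lemma \ref{lemma:quasisplit_if_base_strictly_local} by a standard noetherian-approximation / spreading-out argument. For each point $s \in S$, fix a geometric point $\bar s$ above $s$; by the previous lemma, $X$ becomes quasisplit after base change to the strictly local scheme $\spec \O^{\on{et}}_{S,\bar s}$. Since $\spec \O^{\on{et}}_{S,\bar s}$ is the cofiltered limit of étale neighborhoods of $\bar s$ in $S$, and both conditions defining quasisplitness are of finite character, I expect the relevant data to descend to some finite-type étale neighborhood $V_s$ of $\bar s$. The disjoint union $V = \coprod_s V_s$, indexed over a family of points $s$ hitting every point of $S$, will then be the desired étale cover.

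I would descend condition (2) first. The morphism $\sing(X/S) \to S$ is proper (since $X/S$ is proper) and quasi-finite by Proposition \ref{structure locale des courbes nodals}, hence finite. Over $\spec \O^{\on{et}}_{S,\bar s}$ it decomposes as a disjoint union of finitely many closed immersions, specified by a finite set of orthogonal idempotents of the global ring of $\sing(X_{\spec \O^{\on{et}}_{S,\bar s}}/\spec \O^{\on{et}}_{S,\bar s})$ together with surjections $\O^{\on{et}}_{S,\bar s} \twoheadrightarrow \O_{F_i^{\on{sh}}}$. Such finite data descend to an étale neighborhood by limit arguments, yielding condition (2) on $V_s$.

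Next I would descend condition (1). The fiber $X_{\bar s}$ has finitely many irreducible components, and by quasisplitness over $\spec \O^{\on{et}}_{S,\bar s}$ each admits a smooth section. Each section is a morphism to an $S$-scheme of finite presentation, so descends to a morphism on some étale neighborhood of $\bar s$; after shrinking $V_s$, I may assume that all finitely many descended sections $\sigma_1, \dots, \sigma_n$ are defined on $V_s$ and land in $X^{\on{sm}}_{V_s}$.

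The main obstacle is to verify that condition (1) actually holds at every point of $V_s$, not merely at the chosen point $v_s$ above $\bar s$. For any $w \in V_s$ with $v_s$ in the Zariski closure of $\{w\}$, Proposition \ref{graphes duaux et changement de base} tells me that the dual graph at a geometric point above $w$ is obtained from the dual graph at $\bar s$ by contracting edges; hence each irreducible component of $X_w$ is the image of (at least) one component of $X_{\bar s}$ under this contraction, and by continuity of the $\sigma_i$ it receives at least one of them. For points of $V_s$ not generizing $v_s$, the sections $\sigma_1,\dots,\sigma_n$ may miss some components, and one must refine: either add further sections on $V_s$ by repeating the descent at these points, or enlarge the final cover by including further neighborhoods centered at them. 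Since the fibers of $X/S$ have only finitely many components and $S$ is locally noetherian, a noetherian induction on $V_s$ shows that this refinement process terminates locally; each piece of the final cover $V$ is quasisplit, so $X_V/V$ is quasisplit as required.
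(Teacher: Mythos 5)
The paper leaves this corollary without a written proof, treating it as an immediate spreading-out consequence of Lemma~\ref{lemma:quasisplit_if_base_strictly_local}; your plan matches the intended route. The descent of condition (2) and of finitely many sections to a finite-level \'etale neighbourhood $V_s$ is routine, and the use of Proposition~\ref{graphes duaux et changement de base} to show that the descended sections meet every irreducible component of $X_w$ for every $w$ \emph{generizing} $v_s$ is exactly the right key observation.

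The last paragraph, however, has a genuine gap. The ``noetherian induction on $V_s$'' is not actually set up: it is not said what decreases; ``adding further sections on $V_s$'' may be impossible, since the needed sections exist only after a further \'etale localisation at a different point, which need not remain a neighbourhood of $v_s$; and ``enlarging the final cover by including further neighbourhoods'' does nothing to repair condition (1) on the original piece $V_s$, so the pieces of the resulting disjoint union are not shown to be quasisplit. What the argument requires, and does not supply, is a reason why the bad locus $Z\subset V_s$ --- the set of points $w$ at which some component of $X_w$ avoids all the descended sections $\sigma_i$ --- can be excised from $V_s$ by a Zariski shrinking around $v_s$. Your generization step shows that $Z$ is stable under specialization and that $Z\cap\spec\O_{V_s,v_s}=\emptyset$, but a subset of a noetherian scheme with those two properties need not miss any open neighbourhood of $v_s$ (take $Z$ to be the set of closed points distinct from $v_s$ on an irreducible curve). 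The missing ingredient is constructibility of $Z$: a constructible set stable under specialization is closed, and then $v_s\notin Z$ gives the desired shrinking $V_s\setminus Z$. Constructibility is cheap --- $Z$ is the image, under the finite-type morphism $(X_{V_s}/V_s)^{\on{sm}}\to V_s$, of the closed complement of the (open) fiber-components through the $\sigma_i$, so Chevalley applies. Alternatively, one may sidestep it entirely by spreading out the \emph{surjectivity} of $\coprod_i V\to\pi_0\bigl((X_V/V)^{\on{sm}}/V\bigr)$ along the filtered system of \'etale neighbourhoods $V$ of $\bar s$, using the standard permanence of surjectivity under cofiltered limits for finitely presented morphisms.
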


\begin{lem}\label{lemma:branches_at_a_singularity_are_Cartier_disjoint}
Let $S$ be a locally Noetherian scheme, $X/S$ a quasisplit nodal curve, $s$ a point of $S$ and $x$ a singular point of $X_s$. Quasisplitness of $X/S$ gives a factorization
\[
x \to F \to \sing(X/S) \to X \to S,
\]
where $F \to S$ is a closed immersion and $F \to \sing(X/S)$ is the connected component containing $x$. Then, there exist an \'etale neighbourhood $(V,y)$ of $x$ in $X$, two effective Cartier divisors $C,D$ on $V$ and an isomorphism $V \times_X F = C \times_V D$ such that $V \times_S F$ is the union of $C$ and $D$.
\end{lem}

\begin{proof}
Let $\bar s$ be a geometric point of $S$ mapping to $s$, and $\bar x=x\times_s \bar s$. By Proposition \ref{structure locale des courbes nodals}, we have an isomorphism $\widehat{\O_{X,\bar x}^{\,\on{\acute{e}t}}}=\widehat{\O_{S,\bar s}^{\,\on{\acute{e}t}}}[[u,v]]/(uv-\Delta)$, where $\Delta$ is a lift to $\O_{S,\bar s}^{\,\on{\acute{e}t}}$ of the thickness of $x$. The base change of $F/S$ to $\spec\O_{S,\bar s}^{\,\on{\acute{e}t}}$ is cut out by $\Delta$, and the zero loci $C_u$ of $u$ and $C_v$ of $v$ are effective Cartier divisors on $\widehat{\O_{X,\bar x}^{\,\on{\acute{e}t}}}$, intersecting in $\widehat{\O_{X,\bar x}^{\,\on{\acute{e}t}}}/(u,v) = F\times_X \spec \widehat{\O_{X,\bar x}^{\,\on{\acute{e}t}}}$. The union of $C_u$ and $C_v$ is $\widehat{\O_{S,\bar s}^{\,\on{\acute{e}t}}}[[u,v]]/(\Delta, uv)$, so the proposition follows by a limit argument.
\end{proof}

\section{Primality, local factoriality and base change}\label{section_intro2}

\subsection{Smooth-factorial schemes}
The main results of this article hold when the base scheme $S$ is quasiexcellent, locally Noetherian and smooth-factorial (\textit{cf.} Definition \ref{definition:smooth_factorial}). In this subsection, we discuss smooth-factoriality and try to give some intuition for it.

\begin{lem}[Popescu's theorem]\label{lemma:Popescu}
Let $R$ be a Noetherian and excellent local ring; then $\widehat{R}$ is a directed colimit of smooth $R$-algebras.
\end{lem}

\begin{proof}
This is a special case of~\cite[\href{https://stacks.math.columbia.edu/tag/07GC}{Tag 07GC}]{stacks-project}.
\end{proof}

\begin{defi}\label{definition:smooth_factorial}
Let $S$ be a scheme. We say that $S$ is \emph{smooth-factorial} (resp.\ \emph{\'etale-factorial}) if any smooth (resp.\ \'etale) $S$-scheme is locally factorial.
\end{defi}

\begin{rem}
Any regular scheme $S$ is smooth-factorial.
\end{rem}

\begin{lem}[\textit{cf.} \protect{\cite[Proposition~1]{Danilov1970On-A-Conjecture}}]\label{lemma:factoriality_descends_under_faithfully_flat_ring_ext}
Let $R\to R'$ be a faithfully flat morphism of Noetherian, integrally closed local rings. Then any ideal $p\subset R$ is principal if $p\otimes_R R'$ is. In particular, if $R'$ is a unique factorization domain, then $R$ is a unique factorization domain.
\end{lem}

\begin{proof}
By faithfully flat descent, $p$ is a finitely generated projective $R$-module of rank $1$, so it is principal.
\end{proof}

\begin{cor}\label{corollary:etale_factorial_equiv_geometrically_factorial}
Let $S$ be a normal and locally Noetherian scheme. Then $S$ is \'etale-factorial if and only if all of its \'etale local rings are unique factorization domains.
\end{cor}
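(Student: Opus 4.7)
The plan is to prove the two implications separately; the reverse direction follows almost immediately from Lemma \ref{lemma:factoriality_descends_under_faithfully_flat_ring_ext}, while the forward direction is the main obstacle and requires a filtered-colimit argument at the \'etale stalks, since arbitrary filtered colimits of UFDs need not be UFDs. Normality and Noetherianity of all rings in sight will be preserved throughout because $S$ is normal locally Noetherian and \'etale morphisms and strict henselizations preserve both properties.

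For the reverse direction ($\Leftarrow$), I would assume that every \'etale local ring of $S$ is a UFD. Given an \'etale morphism $V \to S$ and a point $v \in V$, I choose a geometric point $\bar v$ above $v$ and let $\bar s$ be its image in $S$. Since $V \to S$ is \'etale, the canonical map $\O_{V,v} \to \O^{\on{et}}_{S,\bar s}$ realises the target as the strict henselization of $\O_{V,v}$, and is therefore faithfully flat; moreover the target is a UFD by hypothesis. Lemma \ref{lemma:factoriality_descends_under_faithfully_flat_ring_ext} then yields that $\O_{V,v}$ is a UFD, and since $v \in V$ was arbitrary, $V$ is locally factorial.

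For the forward direction ($\Rightarrow$), assume $S$ is \'etale-factorial, fix a geometric point $\bar s$ of $S$, and set $A := \O^{\on{et}}_{S,\bar s}$. By definition $A = \varinjlim R_i$, where $R_i$ ranges over the local rings $\O_{V_i, v_i}$ of \'etale neighbourhoods of $\bar s$; each $R_i$ is a UFD by \'etale-factoriality, and $A$ is itself Noetherian normal local. To prove $A$ is a UFD, I would pick a height one prime $P \subset A$ and show it is principal. Since $A$ is Noetherian, $P = (f_1, \ldots, f_n)$, and all the generators lie in some $R_i$. Setting $I := (f_1, \ldots, f_n) R_i$ and $P_i := P \cap R_i$, one has $I \subseteq P_i$ together with $I A = P = P_i A$; since the strict henselization $R_i \to A$ is faithfully flat, applying this to the quotient $P_i / I$ forces $P_i = I$, so $P_i$ is prime in $R_i$. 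The dimension formula for flat local homomorphisms then gives $\on{ht}_A(P) = \on{ht}_{R_i}(P_i) + \dim(A_P / P_i A_P)$, and since $P_i A = P$ the last term vanishes, so $P_i$ has height one. Because $R_i$ is a UFD, $P_i = (t)$ is principal, whence $P = tA$ is principal and $A$ is a UFD.
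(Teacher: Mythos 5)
Your proof is correct. The paper gives no explicit argument for this corollary; it is placed directly after Lemma \ref{lemma:factoriality_descends_under_faithfully_flat_ring_ext}, which supplies exactly your reverse direction: the strict henselization map $\O_{V,v}\to\O_{S,\bar s}^{\on{et}}$ is a faithfully flat local homomorphism of Noetherian normal local rings, so factoriality of the target descends to $\O_{V,v}$. The substantive content you add is the forward direction, where one must show that the strict henselization, being a Noetherian filtered colimit of UFDs along flat local transition maps, is itself a UFD; as you rightly note, an arbitrary filtered colimit of UFDs need not be a UFD, so this direction genuinely needs an argument. Yours is sound: contract a height-one prime $P$ to an $R_i$ large enough to contain its finitely many generators, establish $(P\cap R_i)A=P$, deduce from the dimension formula for flat local maps that $P\cap R_i$ has height one, use that $R_i$ is a UFD to make it principal, and push forward. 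One minor presentational slip: you infer that $P_i$ is prime from $P_i=I$, but $P_i=P\cap R_i$ is already prime as the contraction of a prime ideal; the equality $P_i=I$ is a pleasant byproduct of faithful flatness but is not logically necessary, since $P_iA=P$ follows already from $I\subseteq P_i\subseteq P$ and $IA=P$.
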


\begin{rem}
In view of Corollary \ref{corollary:etale_factorial_equiv_geometrically_factorial}, \'etale-factoriality is a relatively easy condition to understand and verify, but smooth-factoriality is \textit{a priori} harder to grasp. It seems reasonable to hope that they are equivalent under mild assumptions (\textit{e.g.}\ local Noetherianity). By Lemma \ref{lemma:smooth_factorial_equiv_etale_factorial}, proving this reduces to showing that given a Noetherian, strictly henselian, local unique factorization domain $R$, the strict localizations of $R[X]$ at $\mathfrak m_R$ and at $\mathfrak q:=(\mathfrak m_R,T)$ are unique factorization domains. In~\cite{Danilov1970On-A-Conjecture}, Vladimir Danilov states the related conjecture that $R[[X]]$ must be a unique factorization domain. When $R$ is excellent, the equivalent claims in Lemma \ref{lemma:smooth_factorial_equiv_etale_factorial} imply Danilov's conjecture by Lemma \ref{lemma:Popescu}. Conversely, Danilov's conjecture implies that $(R[X]_{\mathfrak q})^{\mathrm{sh}}=(R[X]_{\mathfrak q})^{\mathrm{h}}$ is a unique factorization domain by Lemma \ref{lemma:factoriality_descends_under_faithfully_flat_ring_ext}.
\end{rem}

\begin{lem}\label{lemma:smooth_factorial_equiv_etale_factorial}
The following three claims are equivalent: 
\begin{enumerate}
\item \label{lem4.7-1} A locally Noetherian scheme $S$ is smooth-factorial if and only if it is \'etale-factorial.
\item \label{lem4.7-2} If $S$ is a locally Noetherian \'etale-factorial scheme, then $\mathbb A^1_S$ is \'etale-factorial. 
\item \label{lem4.7-3} If $R$ is a strictly henselian, Noetherian local ring with $\spec R$ \'etale-factorial, then the strict localizations of $R[X]$ at the prime ideals $\mathfrak m_R$ and at $(\mathfrak m_R,T)$ are unique factorization domains.
\end{enumerate}
\end{lem}

\begin{proof}
We clearly have \eqref{lem4.7-1}$\implies$\eqref{lem4.7-2} and \eqref{lem4.7-2}$\implies$\eqref{lem4.7-3}. Any smooth morphism of schemes $Y \to S$ factors locally as $Y \to \A^n_S \to S$, where $Y \to \A^n_S$ is \'etale, so by induction we have \eqref{lem4.7-2}$\implies$\eqref{lem4.7-1}. For \eqref{lem4.7-3}$\implies$\eqref{lem4.7-2}, suppose that \eqref{lem4.7-3} holds, and consider a locally Noetherian, \'etale-factorial scheme $S$. It suffices to show that the \'etale local ring of $\mathbb A^1_S$ at an arbitrary point $y$ is a unique factorization domain. Denote by $s$ the image of $y$ in $S$. By Lemma~\ref{lemma:factoriality_descends_under_faithfully_flat_ring_ext}, we may assume that $S=\spec R$ is strictly local, with closed point $s$. Translating by a $S$-section of $\mathbb A^1_S$ if necessary, we may assume that the image of $y$ in $\mathbb A^1_s$ is either the generic point or the origin. Therefore, $y$ corresponds to one of the ideals $\mathfrak m_R$ and $(\mathfrak m_R,T)$ of $R[T]$, and we are done.
\end{proof}

\begin{rem}
The claims of Lemma \ref{lemma:smooth_factorial_equiv_etale_factorial} remain equivalent if one removes the Noetherianity assumptions in all three of them.
\end{rem}

\subsection{Permanence of primality under certain morphisms of local rings}

Let $R$ be a strictly henselian local ring such that $\widehat R$ is a unique factorization domain, let $X/\spec R$ be a nodal curve, and let $x$ be a singular closed point of $X$ whose thickness is prime in $\widehat R$. We will see in Lemma~\ref{les anneaux locaux sont UFD} that $X$ is locally factorial at $x$. Since a generic line bundle on a locally factorial scheme always extends to a line bundle, it follows that, in order to construct N\'eron models of Jacobians, we are interested in questions of permanence of primality (of an element of a smooth-factorial ring) under smooth maps, \'etale maps and completions. We discuss these matters in this section.

An element $\Delta$ of an integral local ring $R$ is prime in $R^{\mathrm{sh}}$ when the quotient $R/(\Delta)$ is \emph{geometrically unibranch} (\textit{i.e.}\ its strict henselization is integral, or, equivalently, its normalization is local with purely inseparable residue extension), so we are interested in questions of permanence of geometrically unibranch rings under tensor product. For a more detailed discussion on unibranch rings or counting branches in general, see~\cite[chapitre~IX]{Raynaud} or~\cite[section~23.2]{EGA4.1}. 

In~\cite{Tensor_local}, Moss Eisenberg Sweedler gives a necessary and sufficient condition for the tensor product of two local algebras over a field to be local. We are interested in a sufficient condition for algebras over a strictly local ring. The proof of~\cite{Tensor_local} carries over without much change: we reproduce it here.

\begin{lem}\label{lemme le produit tensoriel reste local}
Let $R$ be a strictly henselian local ring, $R\ra A$ an integral morphism of local rings with purely inseparable residue extension and $R\ra B$ any morphism of local rings. Then $A\otimes_R B$ is local, and its residue field is purely inseparable over that of $B$.
\end{lem}

\begin{proof}
Let $\m$ be a maximal ideal of $A\otimes_R B$. The map $B\ra~A\otimes_R B$ is integral, so it has the going-up property (\textit{cf.} \cite[\href{https://stacks.math.columbia.edu/tag/00GU}{Tag 00GU}]{stacks-project}); therefore, the inverse image of $\m$ in $B$ is a maximal ideal: it must be $\m_B$. Thus $\m$ contains $A\otimes_R\m_B$.

In particular, $\m$ also contains the image of $\m_R$ in $A\otimes_R B$: it corresponds to a maximal ideal of \mbox{$(A\otimes_R B)/(\m_R A\otimes_R B)$,} which we will still call $\m$. We have a commutative diagram
\[
\xymatrix{
k_R\ar[r]\ar[d]&B/\m_R B\ar[d]\\
A/\m_R A\ar[r]&(A\otimes_R B)/(\m_R A\otimes_R B)\rlap{.}
}
\]
Since $A/\m_R A$ is local and integral over the field $k_R$, its maximal ideal $m_A$ is nilpotent and is its only prime ideal. The inverse image of $\m$ in $A/\m_R A$ is a prime ideal, so it can only be $\m_A$. This shows that, as an ideal of $A\otimes_R B$, $\m$ also contains $\m_A\otimes_R B$.

Every maximal ideal of $A\otimes_R B$ contains both $\m_A\otimes_R B$ and $A\otimes_R\m_B$, so the maximal ideals of $A\otimes_R B$ are in bijective correspondence with those of $k_A\otimes_{k_R} k_B=A\otimes_R B/(\m_A\otimes_R B+A\otimes_R\m_B)$. We will now show that the latter is local, with purely inseparable residue extension over $k_B$.

By hypothesis, the extension $k_A/k_R$ is purely inseparable. If $k_R$ has characteristic $0$, then $k_A=k_R$, and we are done. Suppose $k_R$ has characteristic $p>0$. For any $x\in k_A\otimes_{k_R} k_B$, we can write $x$ as a finite sum $\sum_{i\in I}\lambda_i\otimes\mu_i$ with the $\lambda_i,\mu_i$ in $k_A,k_B$, respectively. There is an integer $N>0$ such that for all $i$, $\lambda_i^{p^N}$ is in $k_R$. Therefore, $x^{p^N}=\sum_{i\in I}\lambda_i^{p^N}\mu_i^{p^N}$ is in $k_B$, so $x$ is either nilpotent or invertible. It follows that $k_A\otimes_{k_R} k_B$ is local, with maximal ideal its nilradical, and that its residue field is purely inseparable over $k_B$, as claimed.
\end{proof}

\begin{lem}\label{lemme unibranche implique unibranche apres morphisme lisse local}
Let $(R,\m)$ be an integral and strictly local Noetherian ring. Let $R\ra R'$ be a smooth ring map, let $\p$ be a prime ideal of $R'$ containing $\m R'$, and let $(R'_p)^{\mathrm{sh}}$ be a strict henselization of $R'_\p$. Then $R'_\p$ is geometrically unibranch; \textit{i.e.}\ $(R'_p)^{\mathrm{sh}}$ is an integral domain.
\end{lem}

\begin{proof}
We know $(R'_p)^{\mathrm{sh}}$ is reduced since it is a filtered colimit of smooth $R$-algebras. Let $B,B'$ be the integral closures of $R,R'_\p$ in their respective fraction fields. The ring $R'_\p$ is integral, so by~\cite[corollaire~IX.1]{Raynaud}, $(R'_p)^{\mathrm{sh}}$ is an integral domain if and only if $B'$ is local and the extension of residue fields of $R'_\p\ra B'$ is purely inseparable. But any smooth base change of $B/R$ remains normal (see~\cite[Corollary 8.2.25]{Liu}), so $B\otimes_R R'_\p$ is normal as a filtered colimit of normal $B$-algebras. Moreover, any normal algebra over $R'_\p$ must factor through $B\otimes_R R'_\p$, so we have $B'=B\otimes_R R'_\p$. Applying Lemma \ref{lemme le produit tensoriel reste local}, we find that $B'$ is local and the extension of residue fields of $R'_\p\ra B'$ is purely inseparable, which concludes the proof.
\end{proof}

\begin{cor}\label{corollary:irred_of_etale_loc_ring_stable_under_smooth_maps}
Let $S$ be a smooth-factorial scheme and $Y\to S$ a smooth morphism, and consider a commutative square
\[
\xymatrix{
y \ar[r]\ar[d] 	& s \ar[d] \\
Y \ar[r]		& S\rlap{,}
}
\]
where $s \to S$ and $y \to Y$ are geometric points. Then for any prime element $\Delta$ of $\O_{S,s}^{\,\on{\acute{e}t}}$, the image of $\Delta$ in $\O_{Y,y}^{\,\on{\acute{e}t}}$ is prime.
\end{cor}

\begin{proof}
Base change to $\spec\O_{S,s}^{\,\on{\acute{e}t}}/(\Delta)$, replace $Y$ by an affine neighbourhood of $y$ in $Y$, and apply Lemma~\ref{lemme unibranche implique unibranche apres morphisme lisse local}.
\end{proof}

\begin{lem}\label{lemme les irreductibles au complete sont les irreductibles}
	Let $R$ be a strictly henselian and excellent local ring. Then an element $\Delta$ of $R$ is prime in $R$ if and only if it is prime in $\widehat{R}$.
\end{lem}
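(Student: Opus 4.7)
The backward direction is immediate from the faithful flatness of $R \to \widehat R$: if $\Delta$ is prime in $\widehat R$, then $R/\Delta R$ injects into the domain $\widehat R/\Delta \widehat R$, so it is a domain; and $\Delta$ is nonzero and a non-unit in $R$ since otherwise it would be zero or a unit in $\widehat R$.

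For the forward direction, I would set $A := R/\Delta R$. By primality of $\Delta$, $A$ is a domain; as a quotient of the excellent strictly henselian ring $R$ by a proper ideal, $A$ is itself excellent and strictly henselian (residue field $k_R$ is preserved, and henselianness passes to quotients). Since $\widehat A = \widehat R / \Delta \widehat R$, it suffices to prove that $\widehat A$ is an integral domain.

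The plan is to compare $\widehat A$ to the completion of its normalization. Let $A \to A'$ be the normalization of $A$ in $\Frac A$. Excellence implies $A$ is Nagata, so $A'$ is a finite $A$-module. A finite algebra over a henselian local ring splits as a finite product of local rings; since $A'$ is a domain, it is local (and henselian). As a finite extension of the excellent ring $A$, $A'$ is excellent, and by construction it is a normal local domain. Its formal fibers are therefore geometrically regular, hence geometrically normal, and together with the normality of $A'$ this yields that $\widehat{A'}$ is normal. Being local, $\widehat{A'}$ is connected, and hence is a Noetherian normal local domain. Finally, the injection $A \hookrightarrow A'$ remains injective after the faithfully flat base change to $\widehat A$, giving an embedding
\[
\widehat A \hookrightarrow \widehat A \otimes_A A' = \widehat{A'},
\]
where the equality uses that $A'$ is finite over $A$ (so the $\mathfrak m_A$-adic and $\mathfrak m_{A'}$-adic completions of $A'$ coincide). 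Hence $\widehat A$ is a subring of a domain and is itself a domain, which gives the required primality of $\Delta$ in $\widehat R$.

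The main substantive input is the passage from normality of the excellent local ring $A'$ to normality of $\widehat{A'}$, which relies essentially on the geometric regularity of formal fibers built into excellence. All remaining steps are a routine interplay between finiteness, henselianness, and faithfully flat descent.
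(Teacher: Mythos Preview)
Your proof is correct. The paper takes a different route: it invokes Popescu's theorem (Lemma~\ref{lemma:Popescu}) to write $\widehat R$ as a filtered colimit of smooth $R$-algebras, so that $R/(\Delta)\to\widehat R/(\Delta)$ is a colimit of smooth maps, and then appeals to the earlier Lemma~\ref{lemme unibranche implique unibranche apres morphisme lisse local} (geometric unibranchness ascends along smooth maps) to conclude.

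Your argument bypasses Popescu entirely. You work with the normalization $A'$ of $A=R/(\Delta)$, use excellence (via Nagata) to get $A'/A$ finite, henselianness plus integrality of $A'$ to get $A'$ local, and then the fact that the completion of a normal excellent local ring is normal to obtain that $\widehat{A'}$ is a normal local domain into which $\widehat A$ embeds. This is more self-contained and uses excellence only through the normality of formal fibers, not their full geometric regularity. The paper's approach, by contrast, fits naturally with its running theme (cf.\ Corollary~\ref{corollary:irred_of_etale_loc_ring_stable_under_smooth_maps}) of tracking primality under smooth base change, so that once Lemma~\ref{lemme unibranche implique unibranche apres morphisme lisse local} is in place the present lemma becomes a one-line corollary.
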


\begin{proof}
The nontrivial implication is the "only if" part. Suppose $\Delta$ is prime in $R$. By Lemma \ref{lemma:Popescu}, $\widehat{R}$ is a directed colimit of smooth $R$-algebras. Therefore, $R/(\Delta) \to \widehat{R}/(\Delta)$ is a colimit of smooth $R/(\Delta)$-algebras, and we conclude by Lemma~\ref{lemme unibranche implique unibranche apres morphisme lisse local}.
\end{proof}

\begin{lem}\label{lemme existence des sections avant completion pour raffinements asymetriques}
Let $S$ be an excellent and smooth-factorial scheme, $X/S$ a $S$-scheme of finite presentation, $\bar s$ a geometric point of $S$ and $x$ a closed point of $X_{\bar s}$ with an isomorphism $\widehat{\O_{X,x}^{\,\on{\acute{e}t}}}=\widehat{\O_{S,\bar s}^{\,\on{\acute{e}t}}}[[u,v]]/(uv-\Delta)$ for some $\Delta\in\m_{\bar s}\subset \O_{S,{\bar s}}^{\,\on{\acute{e}t}}$. For every $t_1,t_2\in \m_{\bar s}$ such that $t_1t_2=\Delta$, there exist an \'etale neighbourhood $S' \to S$ of ${\bar s}$ and a section $S' \to X$ through $x$ such that the induced map $\widehat{\O_{X,x}^{\,\on{\acute{e}t}}}\ra\widehat{\O_{S',{\bar s}}^{\,\on{\acute{e}t}}}$ sends $u,v$ to generators of $(t_1)$ and $(t_2)$, respectively.
\end{lem}

\begin{proof}
Put $R=\O_{S,{\bar s}}^{\,\on{\acute{e}t}}$. Then $\widehat{R}$ is a unique factorization domain by Lemma~\ref{lemma:Popescu}. Consider the map $\widehat{\O_{X,x}^{\,\on{\acute{e}t}}}\ra\widehat{R}$ that sends $u,v$ to $t_1,t_2$, respectively. Compose it with $\O_{X,x}^{\,\on{\acute{e}t}}\ra\widehat{\O_{X,x}^{\,\on{\acute{e}t}}}$ to get a map $f_0:\O_{X,x}^{\,\on{\acute{e}t}}\ra\widehat{R}$.

For Noetherian local rings, quotients commute with completion with respect to the maximal ideal, so two distinct ideals are already distinct modulo some power of the maximal ideal. Let $\prod_{i=1}^n\Delta_i^{\nu_i}$ be the prime factor decomposition of $\Delta$ in $R$. Principal ideals of $R$ of the form $(\Delta_i^{\mu_i})$ with $0\leq\mu_i\leq\nu_i$ are pairwise distinct and in finite number, so there exists some $N\in\N$ such that their images in $R/\m_R^N$ are pairwise distinct. Since $R$ is henselian and excellent, it has the Artin approximation property (\textit{cf.} \cite[\href{https://stacks.math.columbia.edu/tag/07QY}{Tag 07QY}]{stacks-project}), so there exists a map $f\colon\O_{X,x}^{\,\on{\acute{e}t}}\ra R$ which coincides with $f_0$ modulo $m_R^N$. This $f$ induces a map $\widehat{f}:\widehat{\O_{X,x}^{\,\on{\acute{e}t}}}\ra\widehat{R}$. Denote by $a,b$ the respective images of $u,v$ by $\widehat{f}$; we have $a=t_1$ and $b=t_2$ in $R/\m_R^N$. But $ab=\Delta$ in $\widehat{R}$ and, by Lemma~\ref{lemme les irreductibles au complete sont les irreductibles}, $\Delta$ has the same prime factor decomposition in $R$ and $\widehat{R}$, so the only principal ideals of $\widehat{R}$ containing $\Delta$ are of the form $(\Delta_i^{\mu_i})$ with $0\leq\mu_i\leq\nu_i$. By the definition of $N$, we get $a\widehat{R}=t_1\widehat{R}$ and $b\widehat{R}=t_2\widehat{R}$. Since $X/S$ is finitely presented, $f$ comes from an $S$-morphism $S' \to X$, where $S'$ is an \'etale neighbourhood of ${\bar s}$ in~$S$.
\end{proof}

\section{Sections of nodal curves}\label{section_intro3}

We present some technicalities regarding sections of nodal curves, with a view towards studying those morphisms that are locally the blow-up in the ideal sheaf of a section.
The basis for this formalism was thought of together with Giulio Orecchia.

\subsection{Type of a section}

We will define a combinatorial invariant, the \emph{type} of a section, summarizing information about the behaviour of the said section around the singular locus of a nodal curve $X/S$. Later on, we will show that sections of all types exist \'etale-locally on the base (\textit{cf.} Proposition~\ref{proposition:admissibles_are_a_basis}) and that the type of a section locally characterizes the blow-up of $X$ in the ideal sheaf of that section (Corollary~\ref{corollary:unicite_zariski_locale_des_T-refinements}).

\begin{defi}\label{definition:orientation}
Let $S$ be a locally Noetherian scheme, $X/S$ a quasisplit nodal curve, $s$ a point of $S$ and $x$ a singular point of $X_s$. Let $F$ be the connected component of $\sing(X/S)$ containing $x$. Then the set of connected components of $(X\backslash F)\times_X \spec \O_{X,x}^{\,\on{\acute{e}t}}\times_S F$ is a pair $\{C,D\}$ (see Proposition \ref{structure locale des courbes nodals} and Lemma \ref{lemma:branches_at_a_singularity_are_Cartier_disjoint}), on which the Galois group $\aut_{O_S}(\O_{S,s}^{\,\on{\acute{e}t}})=\gal(k(s)^{\mathrm{sep}}/k(s))$ acts. If this action is trivial, we say $X/S$ is \emph{orientable at $x$}, and we call \emph{orientations of $X/S$ at $x$} the ordered pairs $(C,D)$ and $(D,C)$. The scheme-theoretic closures of $C$ and $D$ in $\spec \O_{X,x}^{\,\on{\acute{e}t}}$ are effective Cartier divisors, and we will often also call them $C$ and $D$.

If $X/S$ is orientable at a singular point $x$, and if $x'$ is a singular point specializing to $x$, then $X/S$ is orientable at $x'$ and there is a canonical bijection between orientations at $x$ and at $x'$. Given an orientation $(C_1,C_2)$ at~$x$, we will often still write $(C_1,C_2)$ for the induced orientation at $x'$.

We say $X/S$ is \emph{orientable} if it admits orientations at all points, compatible with the generization isomorphisms between orientations. In that case, we call \emph{global orientation} (or just \emph{orientation}) of $X/S$ a compatible family $(C_{1,x},C_{2,x})_{x\in\sing(X/S)}$, where $(C_{1,x},C_{2,x})$ is an orientation at $x$. We will often abusively write global orientations as ordered pairs $(C_1,C_2)$ and confuse them with the induced orientation at any given point $x\in\sing(X/S)$.
\end{defi}

\begin{rem}
If $X/S$ is orientable, then it is orientable at every point, but the converse is not true in general.
\end{rem}

\begin{rem}\label{remark:orientations_are_choices_of_preimage_in_normalization}
The curve $X/S$ is orientable at $x$ if and only if the preimage of $x$ in the normalization of $X_s$ consists of two $k(s)$-rational points, in which case an orientation is the choice of one of these points. Roughly speaking, this also corresponds to picking an orientation of the edge corresponding to $x$ in the dual graph of $X$ at $s$. The "roughly speaking" is due to the case of loops: there is an ambiguity on how to orient them. We could get rid of this ambiguity by using a heavier notion of dual graphs (such as the tropical curves often used in log geometry), but this work does not require it.
\end{rem}

\begin{lem}\label{lemma:orientations_exist_locally}
Let $S$ be a locally Noetherian scheme and $X/S$ a quasisplit nodal curve. Then, there exists an \'etale cover $V \to S$ such that $X_V/V$ is orientable.
\end{lem}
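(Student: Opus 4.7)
The plan is to work étale-locally on $S$, reduce to the case of the strict henselization $T$ of $S$ at a geometric point $\bar s$ (where orientability will be automatic), and descend by a standard limit argument.

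First, I would reduce to the case where $S$ is affine and Noetherian, so that by quasisplitness we may write $\sing(X/S) = \coprod_{i=1}^n F_i$ as a \emph{finite} disjoint union of closed immersions $F_i \hookrightarrow S$. Using Lemma \ref{lemma:branches_at_a_singularity_are_Cartier_disjoint}, I would construct, for each $i$, a canonical étale double cover $B_i \to F_i$ parameterizing local branches of $X$ along $F_i$: étale-locally on $X$ the lemma provides an unordered pair of Cartier divisors $\{C,D\}$ with $V\times_S F_i = C\cup D$, and the resulting $\mathbb{Z}/2$-cocycle on overlaps defines $B_i$ by descent. With this object in hand, Definition \ref{definition:orientation} unfolds into the statement that a global orientation of $X/S$ is precisely a choice of section of each $B_i \to F_i$.

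Next, I would fix a geometric point $\bar s$ of $S$ and pull back to $T = \spec \O^{\on{et}}_{S,\bar s}$. The key observation is that each non-empty $F_i \times_S T$, being a closed subscheme of the strictly local scheme $T$, is itself strictly local, with unique closed point in the closed fiber of $T$ and hence with separably closed residue field. Finite étale covers of strictly local schemes are trivial disjoint unions, so each $B_i$ admits a section over $F_i \times_S T$. Picking one such section per $i$ yields an orientation of $X_T/T$; the compatibility under generization required by Definition \ref{definition:orientation} is automatic because every point of a strictly local scheme generizes its unique closed point.

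Finally, I would descend these finitely many sections to a common étale neighborhood $V_{\bar s}$ of $\bar s$ via the standard limit property of étale neighborhoods, applied to the finitely presented morphisms $B_i \to F_i$ and $F_i \to S$. Taking $V = \coprod_{\bar s} V_{\bar s}$ over a collection of geometric points covering $S$ produces the desired étale cover. The main step requiring care is the gluing construction of $B_i \to F_i$: one must check that the pair $\{C,D\}$ from Lemma \ref{lemma:branches_at_a_singularity_are_Cartier_disjoint} is canonical up to swap as the étale chart varies, so that the transition data genuinely take values in $\mathbb{Z}/2$; the remainder of the argument is then routine.
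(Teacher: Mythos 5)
Your proof is correct and follows the same route as the paper: reduce to the strictly local case, where orientability is automatic, then descend to an \'etale neighbourhood by finite presentation. The paper states this in two sentences, relying on the reader to interpret \emph{``$X/S$ is of finite presentation''} as licensing the descent; you make that step concrete by packaging the choice of branch into an \'etale double cover $B_i \to F_i$ and invoking the limit property for the finitely presented maps $B_i \to F_i \to S$. This is more work, but it produces a genuinely useful by-product: the observation that a global orientation of $X/S$ is precisely a section of each $B_i \to F_i$, with the generization-compatibility in Definition \ref{definition:orientation} absorbed into the scheme morphism condition. The only point that needs care, as you note yourself, is that the pair $\{C,D\}$ from Lemma \ref{lemma:branches_at_a_singularity_are_Cartier_disjoint} is canonical up to swap as the \'etale chart varies; this follows from the fact that $\{C,D\}$ is, by Definition \ref{definition:orientation}, the set of connected components of $(X\setminus F_i)\times_X \spec\O_{X,x}^{\on{et}}\times_S F_i$, which is functorial in the \'etale neighbourhood. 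Your argument is therefore a fleshed-out version of the paper's, not a different one.
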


\begin{proof}
It suffices to show that any $s \in S$ has an \'etale neighbourhood over which $X$ is orientable, which follows from observing that $X/S$ is finitely presented and that a nodal curve over a strictly local scheme is orientable.
\end{proof}

\begin{lem}\label{lemma:orientable_curves_stable_under_basechange}
Let $S' \to S$ be a morphism between locally Noetherian schemes. Let $X/S$ be a quasisplit nodal curve. If $X/S$ is orientable at a point $x\in X$, then $X':=X\times_S S'$ is orientable at any singular point $x'$ above $x$, and orientations of $X$ at $x$ naturally pull back to orientations of $X'$ at $x'$. In particular, if $X/S$ is orientable, then $X'/S'$ is orientable.
\end{lem}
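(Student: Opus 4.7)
The plan is to reduce the statement to the étale-local Cartier-divisor description of the branches given by Lemma \ref{lemma:branches_at_a_singularity_are_Cartier_disjoint}, which is manifestly stable under base change, and then to transport Galois-invariance through the natural restriction map on absolute Galois groups of residue fields. For setup: let $x' \in X'$ be a singular point above $x$, with images $s' \in S'$ and $s \in S$; since $X' = X \times_S S'$, the two morphisms $\spec k(x') \to S$ factoring through $X$ or through $S'$ agree, so $s'$ maps to $s$ and the residue-field map $k(s) \to k(s')$ is canonical. Let $F \subseteq \sing(X/S)$ and $F' \subseteq \sing(X'/S')$ be the respective connected components containing $x$ and $x'$, so $F' \subseteq F \times_S S'$.

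By Lemma \ref{lemma:branches_at_a_singularity_are_Cartier_disjoint}, I can pick an étale neighbourhood $V \to X$ of $x$ equipped with two effective Cartier divisors $C, D \subset V$ whose union is $V \times_S F$ and whose intersection is $V \times_X F$; the pair $\{C, D\}$ realizes the étale-local pair of branches at $x$. Pulling back, $V' := V \times_S S' \to X'$ is an étale neighbourhood of $x'$, and $C' := C \times_S S'$, $D' := D \times_S S'$ are effective Cartier divisors on $V'$ satisfying the analogous conditions. This is immediate from the local model $V \simeq \spec R[u,v]/(uv-\Delta)$ of Proposition \ref{structure locale des courbes nodals}, in which $C$ and $D$ are the coordinate divisors $\{u=0\}$ and $\{v=0\}$; base-changing does not alter this description. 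Hence $\{C', D'\}$ is the pair of branches at $x'$, and any labeling of the branches at $x$ pulls back canonically to a labeling at $x'$.

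The main step, and the only non-formal part, is verifying Galois-invariance of the pulled-back labeling. Fixing compatible separable closures $k(s)^{\on{sep}} \subseteq k(s')^{\on{sep}}$ yields a restriction map $\rho : \gal(k(s')^{\on{sep}}/k(s')) \to \gal(k(s)^{\on{sep}}/k(s))$. The action of $\gal(k(s')^{\on{sep}}/k(s'))$ on the pair $\{C', D'\}$ is induced from the action on the base-changed geometric picture, and so factors through $\rho$; if the target acts trivially on $\{C, D\}$ (orientability at $x$), then the source acts trivially on $\{C', D'\}$. This proves $X'/S'$ is orientable at $x'$ with the pulled-back orientation. For the global statement, compatibility of orientations under generization is functorial in the base and hence preserved by base change, so a global orientation of $X/S$ induces one on $X'/S'$.
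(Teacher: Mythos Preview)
Your proof is correct but takes a longer route than the paper. The paper's proof is a single sentence: it invokes Remark \ref{remark:orientations_are_choices_of_preimage_in_normalization}, which says that $X/S$ is orientable at $x$ if and only if the two preimages of $x$ in the normalization of $X_s$ are $k(s)$-rational, in which case an orientation is a choice of one of these preimages. Since the normalization of a nodal fiber is smooth and therefore commutes with field extension, and since $k(s)$-rational points become $k(s')$-rational after base change, orientability at $x'$ and the pullback of orientations follow immediately. Your argument instead works directly with the Cartier-divisor description of Lemma \ref{lemma:branches_at_a_singularity_are_Cartier_disjoint} and the factorization of the Galois action through the restriction map $\rho$; this is equivalent but more laborious. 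The rationality criterion of the Remark is exactly the statement that a two-element $G$-set has trivial action if and only if the associated degree-two \'etale algebra is split, which absorbs your Galois step into a single observation and makes the base-change behaviour transparent.
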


\begin{proof}
This follows from Remark \ref{remark:orientations_are_choices_of_preimage_in_normalization}.
\end{proof}

\begin{defi}[Type of a section]\label{definition:type_of_section}
Let $X/S$ be a quasisplit nodal curve with $S$ smooth-factorial. Suppose $X$ is smooth over a dense open subscheme $U$ of $S$. Let $s$ be a point of $S$ and $x$ a singular point of $X_s$. We call \emph{type at $x$} any element of the monoid $\overline{\O_{S,s}^{\,\on{\acute{e}t}}}$ strictly comprised between $1$ and the thickness of $x$ (for the order induced by divisibility). There are only finitely many types at $x$.

Suppose that $X/S$ admits a global orientation $(C_1,C_2)$. Pick an isomorphism
\[
\widehat{\O_{X,x}^{\,\on{\acute{e}t}}}=\widehat{\O_{S,s}^{\,\on{\acute{e}t}}}[[u,v]]/(uv-\Delta_x),
\]
where $C_1$ corresponds to $u=0$ and $\Delta_x\in\O_{S,s}^{\,\on{\acute{e}t}}$ maps to the thickness of $x$ in $\overline \O_{S,s}^{\,\on{\acute{e}t}}$. Let $\sigma$ be a section of $X/S$ through $x$. It induces a morphism
\[
\widehat{\sigma^\#} \colon \widehat{\O_{S,s}^{\,\on{\acute{e}t}}}[[u,v]]/(uv-\Delta_x) \to \widehat{\O_{S,s}^{\,\on{\acute{e}t}}}. 
\]
By Lemma \ref{lemme les irreductibles au complete sont les irreductibles}, $\Delta_x$ has the same prime factor decomposition in $\O_{S,s}^{\,\on{\acute{e}t}}$ and in $\widehat{\O_{S,s}^{\,\on{\acute{e}t}}}$, so there is a canonical embedding of the submonoid of $\widehat{\O_{S,s}^{\,\on{\acute{e}t}}}/(\widehat{\O_{S,s}^{\,\on{\acute{e}t}}})^*$ generated by the factors of $\Delta_x$ into $\overline{\O_{S,s}^{\,\on{\acute{e}t}}}$. We call \emph{type of $\sigma$ at $x$ relative to $(C_1,C_2)$} the image of $u$ in $\overline{\O_{S,s}^{\,\on{\acute{e}t}}}$. It is a type at $x$ and does not depend on our choice of isomorphism $\widehat{\O_{X,x}^{\,\on{\acute{e}t}}}=\widehat{\O_{S,s}^{\,\on{\acute{e}t}}}[[u,v]]/(uv-\Delta_x)$ as long as $C_1$ is given by $u=0$. When they are clear from context, we will omit $x$ and $(C_1,C_2)$ from the notation and just call it the \emph{type of $\sigma$}. In general, given a type~$T$ at $x$, there need not exist a section of type $T$.
\end{defi}

\begin{lem}\label{lemma:type_passes_to_generization}
Let $X/S$, $s$, $x$, $(C_1,C_2)$ and $U$ be as in Definition~\ref{definition:type_of_section} and $\sigma$ be a section $S\to X$ of type $T$ at $x$. Let $s'$ be a generization of $s$. Then there is a singular point of $X_{s'}$ specializing to $x$ if and only if the thickness of $x$ does not map to $1$ in $\overline{\O_{S,s'}^{\,\on{\acute{e}t}}}$. Suppose it is the case, and write $x'$ for this singular point. Then:
\begin{itemize}
\item If the image of $T$ in $\overline{\O_{S,s'}^{\,\on{\acute{e}t}}}$ is either $1$ or the thickness of $x'$, then $\sigma(s')$ is a smooth point of $X_{s'}$.
\item Otherwise, the image of $T$ is a type at $s'$, which we still denote by $T$, and $\sigma$ is of type $T$ at $x'$ relative to $(C_1,C_2)$.
\end{itemize}
\end{lem}

\begin{proof}
By Proposition \ref{structure locale des courbes nodals}, the set of non-smooth points of $X_{s'}/s'$ specializing to $x$ is empty if the thickness of $x$ maps to $1$ in $\overline{\O_{S,s'}^{\,\on{\acute{e}t}}}$, and it is a singleton $\{x'\}$ otherwise. Suppose the latter holds; then we conclude using Proposition \ref{structure locale des courbes nodals} and the definition of the type of a section.
\end{proof}

\begin{rem}
One can think of the thickness of $x$ as the relative version of a length, and of the type of a section $\sigma$ relative to an orientation $(C_1,C_2)$ as a measure of the intersection of $\sigma$ with $C_1$, seen as an effective Cartier divisor locally around $x$ as in Lemma \ref{lemma:branches_at_a_singularity_are_Cartier_disjoint}. In other words, the type is a measure of "how close to $C_1$" the section is.
\end{rem}

\begin{prop}\label{proposition:locus_of_same_type_is_open}
Let $S$ be a smooth-factorial and excellent scheme, and let $X/S$ be a quasisplit nodal curve, smooth over some dense open subscheme $U$ of $S$. Let $\sigma$ and $\sigma'$ be two $S$-sections of $X$. Then the union of $(X/S)^{\mathrm{smooth}}$ with the set of singular points $x$ of $X/S$ at which $\sigma$ and $\sigma'$ have the same type is an open subscheme of $X$, which we call the \emph{same type locus} of $\sigma$ and $\sigma'$.
\end{prop}

\begin{proof}
Since the smooth locus of $X/S$ is open in $X$, the proposition reduces to showing that if $\sigma$ and $\sigma'$ have the same type at a singular point $x$ of $X$, then they have the same type at every singular point in an open neighbourhood of $x$. Let $s$ be the image of $x$ in $S$. Pick an isomorphism
\[
\widehat{\O_{X,x}^{\,\on{\acute{e}t}}}=\widehat{\O_{S,s}^{\,\on{\acute{e}t}}}[[u,v]]/(uv-\Delta_x),
\]
where $\Delta_x\in \O_{S,s}$ is a lift of the thickness of $x$. By hypothesis, the images $\Delta,\Delta'$ of $u$ under the two morphisms $\widehat{\O_{X,x}^{\,\on{\acute{e}t}}} \to \widehat{\O_{S,s}^{\,\on{\acute{e}t}}}$ given by $\sigma$ and $\sigma'$ are the same up to a unit $\lambda\in \widehat{\O_{S,s}^{\,\on{\acute{e}t}}}^\times$. By Lemma \ref{lemma:Popescu}, base changing to a smooth neighbourhood of $s$ in $S$, we may assume that $\Delta,\Delta',\Delta_x$ come from global sections of $\O_S$ and $\lambda$ from a global section of $\O_S^\times$. Pick a smooth neighbourhood $W$ of $x$ in $X$ such that $u,v$ come from global sections of $W$. Shrinking $S$, we may assume that $\sigma,\sigma'$ factor through $W$ and that their comorphisms map $u$ to $\Delta$ and $\Delta'$, respectively. Shrinking further, we may assume that the non-smooth locus of $W/S$ is cut out by $(u,v,\Delta_x)$. Then, the image of $W$ in $X$ is a Zariski open neighbourhood of $x$ contained in the same type locus of $\sigma$ and~$\sigma'$.
\end{proof}


\subsection{Admissible neighbourhoods}

Here we show that when one works \'etale-locally on the base (in a sense that we will make precise), one can always assume that sections of all types exist.

\begin{defi}\label{definition:admissible}
Let $S$ be a smooth-factorial scheme and $X/S$ a nodal curve, smooth over a dense open $U$ of $S$. Let $s$ be a point of $S$ and $(V,v)$ an \'etale neighbourhood of $s$ in $S$. We say that $(V,v)$ is an \emph{admissible neighbourhood of $s$} (relative to $X/S$) when the following conditions are met:
\begin{enumerate}
\item \label{def5.10-1} The curve $X_V/V$ is quasisplit and orientable.
\item \label{def5.10-2} For any singular point $x$ of $X_v$, every prime factor in $\overline{\O_{S,s}^{\,\on{\acute{e}t}}}$ of the thickness of $x$ lifts to a global section of $\O_V$.
\item \label{def5.10-3} For every singular point $x$ of $X_v$ (however oriented), there are sections $V\to X_V$ of all types at $x$.
\end{enumerate}
When $\bar s \to S$ is a geometric point with image $s$ and $(V,v)$ an admissible neighbourhood of $s$ with a factorization $\bar s \to v$, we will also sometimes call $V$ an \emph{admissible neighbourhood of $\bar s$}.
\end{defi}

\begin{rem}\label{remark:strictly_local_schemes_are_admissible}
In the situation of Definition \ref{definition:admissible}, if $S$ is strictly local, then it is an admissible neighbourhood of its closed point.
\end{rem}

\begin{prop}\label{proposition:admissibles_are_a_basis}
Let $X/S$ be a nodal curve, where $S$ is a smooth-factorial and excellent scheme. Then any point $s\in S$ has an admissible neighbourhood.
\end{prop}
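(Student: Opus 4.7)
The plan is to impose the three conditions of admissibility successively by passing to ever smaller étale neighbourhoods of $s$. The key initial observation is that $X_s$ is projective of dimension $1$, so its singular locus is a \emph{finite} set of closed points $\{x_1, \ldots, x_n\}$, and everything that follows need only be arranged for these finitely many points. For condition (1), I would apply Corollary \ref{corollary:curves_are_QS_over_some_etale_cover} followed by Lemma \ref{lemma:orientations_exist_locally} to replace $S$ by an étale neighbourhood over which $X/S$ is quasisplit and orientable, then fix a global orientation $(C_1, C_2)$.

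For condition (2), write $\Delta_i \in \overline{\O_{S,s}^{\on{et}}}$ for the thickness of $x_i$. Lemma \ref{lemme les irreductibles au complete sont les irreductibles} identifies the prime factorisations of a lift of each $\Delta_i$ in $\O_{S,s}^{\on{et}}$ and in $\widehat{\O_{S,s}^{\on{et}}}$, so one may choose representatives in $\O_{S,s}^{\on{et}}$ of the finitely many primes appearing. Since $\O_{S,s}^{\on{et}}$ is the filtered colimit of $\O_V(V)$ over étale neighbourhoods $V$ of $s$, all these representatives are simultaneously defined on some such $V$, over which we replace $S$.

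For condition (3), fix a geometric point $\bar s$ above $s$. For each pair $(i, t)$ consisting of a singular point $x_i$ and a factorization $\Delta_i = t \cdot t'$ with $t, t'$ both in the maximal ideal of $\O_{S,s}^{\on{et}}$, Lemma \ref{lemme existence des sections avant completion pour raffinements asymetriques} produces an étale neighbourhood $V_{i,t} \to S$ of $\bar s$ carrying a section through $x_i$ of type $t$ relative to $(C_1, C_2)$. There are only finitely many such pairs, so the fibre product over $S$ of all the $V_{i,t}$ is still an étale neighbourhood $V$ of $\bar s$; pulling back each section along its corresponding projection, $V$ simultaneously supports sections of every required type. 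Since a section of type $t$ relative to $(C_1, C_2)$ is of type $t'$ relative to $(C_2, C_1)$ (where $tt' = \Delta_i$), this automatically covers sections of all types under the opposite orientation as well. Taking $v \in V$ to be the image of $\bar s$, the pair $(V, v)$ is then admissible.

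The genuine content is concentrated in the third step, which rests on Lemma \ref{lemme existence des sections avant completion pour raffinements asymetriques} (itself combining Artin approximation with Lemma \ref{lemme les irreductibles au complete sont les irreductibles}). The first two steps are essentially spreading-out arguments, made routine by the finiteness of $\sing(X_s)$ and the definition of the étale local ring as a filtered colimit.
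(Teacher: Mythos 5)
Your argument is correct and follows the same route as the paper's own proof: reduce to quasisplit and orientable via Corollary~\ref{corollary:curves_are_QS_over_some_etale_cover} and Lemma~\ref{lemma:orientations_exist_locally}, use finiteness of $\sing(X_s)$ and of prime factors to spread out condition~(2), and invoke Lemma~\ref{lemme existence des sections avant completion pour raffinements asymetriques} for condition~(3). You have merely spelled out the final intersection-of-neighbourhoods step (and the orientation-reversal observation) more explicitly than the paper does, but the skeleton is identical.
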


\begin{proof}
All three conditions in the definition of admissibility are stable under base change to an \'etale neighbourhood of $s$. We can assume that $X/S$ is quasisplit by Corollary \ref{corollary:curves_are_QS_over_some_etale_cover} and that it is orientable by Lemma~\ref{lemma:orientations_exist_locally}. Let $x_1,\ldots,x_n$ be the singular points of $X_s$. The thickness of each $x_i$ has only finitely many prime factors in $\overline{\O_{S,s}^{\,\on{\acute{e}t}}}$, so we can shrink $S$ again into a neighbourhood satisfying condition~\eqref{def5.10-2} of the definition of admissibility. The fact that this $V$ can be shrunk again until it meets all three conditions follows from Lemma \ref{lemme existence des sections avant completion pour raffinements asymetriques}.
\end{proof}

\begin{rem}
If $(V,v)$ is an admissible neighbourhood of a point $s$ of $S$, then $V$ is not necessarily admissible even at generizations of $v$ (see Example \ref{example:irred_not_etale_irred}). Thus, it is not easy \textit{a priori} to find a good global notion of admissible cover.
\end{rem}

\begin{ex}\label{example:irred_not_etale_irred}
Let $R=\spec\C[[u,v,w]]$. Then $R$ is regular (hence smooth-factorial), local and strictly henselian. The element $\Delta:=u^2(v-w)-v^2(u+w)$ is prime in $R$. Let $X_K$ be the elliptic curve over $K:=\Frac R$ cut out in $\P^2_K$ by the equation $y^2=(x-1)(x^2-\Delta)$ (in affine coordinates $x,y$). The minimal Weierstrass model of $X_K$ is a nodal curve $X$ over $\spec R$, whose closed fibre has exactly one singular point with label $\Delta$. Let $t$ be the point of $S:=\spec R$ corresponding to the prime ideal $(u,v)$ of $R$. Then $\Delta$ has two prime factors in the \'etale local ring of $S$ at $t$ (because the invertible elements $v-w$ and $u+w$ of this \'etale local ring admit square roots). Hence, $S$ is an admissible neighbourhood of its closed point but not of $t$ (relative to $X/S$).
\end{ex}

The next proposition states that admissible neighbourhoods behave well with respect to the \emph{smooth} topology.

\begin{prop}\label{proposition:admissibles_compatible_with_smooth_basechange}
Let $S$ be a smooth-factorial scheme and $X/S$ a quasisplit nodal curve, smooth over some dense open subscheme $U$ of $S$. Let $\pi\colon Y\to S$ be a smooth morphism, $y$ a point of $Y$ and $s=\pi(y)$. Let $V$ be an admissible neighbourhood of $s$ in $S$; then $V\times_S Y$ is an admissible neighbourhood of $y$ in $Y$.
\end{prop}

\begin{proof}
This follows from Corollary \ref{corollary:irred_of_etale_loc_ring_stable_under_smooth_maps} and the definition of admissible neighbourhoods.
\end{proof}

\section{Relating different nodal models}\label{section1}
This section is dedicated to constructing inductively nodal models of a smooth curve with prime thicknesses, starting from any nodal model.

\subsection{Arithmetic complexity and motivation for refinements}

\begin{defi}\label{complexite arithmétique}
	Let $M$ be the free commutative monoid over a set of generators $G$. We call \emph{arithmetic complexity} of $m\in M\backslash \{0\}$ the integer $n-1$, where $n$ is the (unique) $n\in\N^*$ such that we can write $m=\prod_{i=1}^n g_i$ with all the $g_i$ in $G$.	Given a graph $\Gamma$ labelled by $M$, we define the arithmetic complexity of an edge to be that of its label, and the arithmetic complexity of $\Gamma$ to be the sum of the arithmetic complexities of its edges. Given a nodal curve $X/S$, where $S=\spec R$ is a local unique factorization domain, the monoid $\overline R$ of principal ideals of $R$ is freely generated by the prime principal ideals. From now on, when we talk about arithmetic complexities of edges of dual graphs, we will always be referring to this set of generators. We define the arithmetic complexity of $X$ at a geometric point $s \to S$ as that of its dual graph at $s$ and the arithmetic complexity of a singular point $x \in X_s$ as that of the corresponding edge. We give similar definitions when $X/S$ is quasisplit and $s\in S$ is a point.
	
	Note that $X$ has arithmetic complexity $0$ if and only if every point has prime thickness: it is an integer measuring "how far away from being prime" the thicknesses are.
\end{defi}

The following lemma essentially shows that nodal curves are locally factorial around their singular points that have prime thicknesses. In particular, any generic line bundle extends locally around such a point.

\begin{lem}\label{les anneaux locaux sont UFD}
	Let $R$ be a complete and local unique factorization domain and $\Delta$ be an element of $\m_R$. Then $\widehat{A}:=R[[u,v]]/\left(uv-\Delta\right)$ is a unique factorization domain if and only if $\Delta$ is prime in $R$.
\end{lem}

\begin{proof}
  Suppose that $\widehat{A}$ is a unique factorization domain, and let $d$ be a prime factor of $\Delta$ in $R$. Denote by $S$ the complement of the prime ideal $(u,d)$ in $\widehat{A}$. Let $\mathfrak p$ be a nonzero prime ideal of $S^{-1}\widehat A$. Then, $p$ contains a nonzero element $x=ux_u+x_v$, with $x_u$ and $x_v$ in $R[[u]]$ and $R[[v]]$, respectively. Since $\mathfrak p \neq S^{-1}\widehat A$, we have $d|x_v$. Let $n$ and $m$ be the maximal elements of $\N^*\cup\{+\infty\}$ such that $u^n|ux_u$ and $d^m|x_v$. Since $x$ is nonzero, we know that either $n$ or $m$ is finite. If $n\leq m$, then $v^nx=\Delta^n\frac{x_u}{u^{n-1}}+d^n\frac{v^nx_v}{d^n}$  is in $\mathfrak p$ and is associated to 
  $d^n$ in $S^{-1}\widehat{A}$, so we obtain $d\in\mathfrak p$, from which it follows that $\mathfrak p=(u,d)$. If $m<n$, a similar argument shows that $\mathfrak p$ contains $u^m$ and thus equals $(u,d)$. Therefore, $S^{-1}\widehat{A}$ has Krull dimension $1$, \textit{i.e.}\ $(u,d)$ has height $1$ in $\widehat{A}$. Since $\widehat{A}$ is a unique factorization domain, it follows that $(u,d)$ is principal in it, from which we deduce that~$\Delta$ and $d$ are associated in $\widehat{A}$. In particular, $\Delta$ is prime in $R$.
	
	The interesting part is the converse: let us assume that $\Delta$ is prime in $R$. We want to show that $\widehat{A}$ is a unique factorization domain. We first prove that $A:=R[u,v]/(uv-\Delta)$ is a unique factorization domain: let $p$ be a prime ideal of $A$ of height $1$; we have to show that $p$ is principal in $A$. We observe that $u$ is a prime element of $A$ since the quotient $A/(u)=R/(\Delta)[v]$ is an integral domain. Therefore, if $p$ contains $u$, then $p=(u)$ is principal. Otherwise, $p$ gives rise to a prime ideal of height $1$ in $A_u:=A[u^{-1}]$, which is principal since $A[u^{-1}]=R[u,u^{-1}]$ is a unique factorization domain. In that case, write $pA_u=fA_u$ for some $f\in A_u$. Multiplying by a power of the invertible element $u$ of $A_u$, we can choose the generator $f$ to be in $A\backslash uA$. Since $p$ is a prime ideal of $A$ not containing $u$, we know $p$ contains $f$ and thus $fA$. We will now prove the reverse inclusion. Let $x$ be an element of $p$. The localization $pA_u=fA_u$ contains $x$, so $x$ satisfies a relation of the form $u^nx=fy$ for some $n\in\N$ and some $y\in A$. But since $u$ is prime in $A$, we know that $u^n$ divides $y$ and $x$ is in $fA$.
	
	Now, we will deduce the factoriality of $\widehat{A}$ from that of $A$. The author would like to thank Ofer Gabber for providing the following proof. Let $q$ be a prime ideal of $\widehat{A}$ of height $1$; we will show that $q$ is principal. We put $S=\spec R$, $X=\spec A$, $\widehat{X}=\spec \widehat{A}$ and $Z=\spec (\widehat{A}/q)$, so that $Z$ is a prime Weil divisor on $\widehat{X}$. Let $\eta,\eta'$ be the generic points of the respective zero loci of $u,v$ in the closed fibre $\spec k_R[[u,v]]/(uv)$ of $\widehat{X}\to S$. Since $u$ and $v$ are prime elements of $\widehat{A}$, we can once again assume that $Z$ contains neither $\eta$ nor~$\eta'$. It follows that the closed fibre of $Z \to S$ is of dimension $0$: the morphism $Z\to S$ is quasifinite, hence finite by~\cite[section~0.7.4]{EGA1}. A fortiori, $\widehat{A}/q$ is finite over $A$, so by Nakayama's lemma, the morphism $A \to \widehat{A}/q$ is surjective. Denote by $p$ its kernel. Then $A/p=\widehat{A}/q$ is $\mathfrak{m}_A$-adically complete and separated, so it maps isomorphically to its completion $\widehat{A}/p\widehat{A}$. The prime ideal $p$ of $A$ is of height $1$ since $\widehat{X}\to X$ is a flat map of normal Noetherian schemes. Therefore, $p$ is principal in $A$, and $q=p\widehat{A}$ is principal in $\widehat{A}$.
\end{proof}

\subsection{Refinements of graphs}
In order to reap the benefits from the properties of nodal models with prime labels, all we need is an algorithm that takes an arbitrary nodal model as an input and returns another one with strictly lower arithmetic complexity.

\begin{defi}\label{definition refinements of graphs}
	As in~\cite[Definition 3.2]{Holmes}, for a graph $\Gamma=(V,E,l)$ with edges labelled by elements of a monoid $M$, we call \emph{refinement} of $\Gamma$ the data of another labelled graph $\Gamma'=(V',E',l')$ labelled by $M$ and two maps
\begin{align*}
E' & \to E, \\
V' & \to E \coprod V
\end{align*}	
	such that:
	\begin{itemize}
	\item Every vertex $v$ in $V$ has a unique preimage $v'$ in $V'$; 
	\item For every edge $e\in E$ with endpoints $v_1,v_2\in V$, there is a chain $C(e)$ from $v_1',v_2'$ in $\Gamma'$ such that the preimage of $\{e\}$ in $V' \coprod E'$ consists of all edges and intermediate vertices of $C(e)$; 
	\item For all $e\in E$, the length of $e$ is the sum of the lengths of all edges of $C(e)$.
	\end{itemize}
	We will often keep the maps implicit in the notation, in which case we call $\Gamma'$ a \emph{refinement} of $\Gamma$ and write $\Gamma'\preceq\Gamma$. We say $\Gamma'$ is a \emph{strict refinement} of $\Gamma$ and write $\Gamma'\prec\Gamma$ if, in addition, the map $E' \to E$ is not bijective.
\end{defi}

\begin{rem}
Informally, a refinement of a graph is obtained by "replacing every edge by a chain of edges of the same total length". Suppose $\Gamma'\preceq\Gamma$; then $\Gamma'\prec\Gamma$ if and only if at least one of the chains $C(e)$ is of length at least $2$, \textit{i.e.}\ if and only if $\Gamma'$ has strictly more edges than $\Gamma$.
\end{rem}

Now we want to blow up $X$ in a way that does not affect $X_U$ but refines the dual graph. We will define \emph{refinements} of curves (\textit{cf.} Definition \ref{definition refinement}). We will show that, \'etale-locally on the base, any refinement of a dual graph of $X$ comes from a refinement of curves.

\subsection{Refinements of curves}

\begin{lem}\label{structure des (C,delta)-blowups}
Let $f\colon X\ra S$ be a quasisplit nodal curve with $S$ smooth-factorial and excellent. Suppose $X$ is smooth over some dense open $U\subset S$. Let $\sigma\colon S\to X$ be a section and $\phi\colon X'\to X$ be the blow-up in the ideal sheaf of $\sigma$. 

Then $\phi$ is an isomorphism above the complement in $X$ of $\,\sing(X/S)\cap\sigma(S)$. In particular, it is an isomorphism above the smooth locus of $X/S$, which contains $X_U$, so $X'$ is a model of $X_U$. 

Moreover, $X'/S$ is a nodal curve, and its dual graphs are refinements of those of $X$. More precisely, let $s$ be a point of $S$, and suppose $\sigma(s)$ is a singular point $x$ of $X_s$. Choose an orientation $(C,D)$ of $X_{\O_{S,s}^{\,\on{\acute{e}t}}}$ at $x$. Let $T_x$ be the thickness of $x$. In $\overline{\O_{S,s}^{\,\on{\acute{e}t}}}$, write
\[
T_x=TT',
\]
where $T$ is the type of $\sigma$ at $x$ relative to $(C,D)$. Let $\Gamma,\Gamma'$ be the respective dual graphs of $X$ and $X'$ at $s$, and let~$e$ be the edge of $\,\Gamma$ corresponding to $x$. Then $e$ has label $T_x$, and one obtains $\Gamma'$ from $\Gamma$ as follows:

\begin{itemize}
\item If $e$ is not a loop, then $C$ and $D$ come from two distinct irreducible components of $X_s$ $($that we still call $C$ and $D)$. In that case, $\Gamma'$ is obtained from $\Gamma$ by replacing $e$ by a chain

\begin{center}
\begin{tikzpicture}
  \SetGraphUnit{3}
  \Vertex{E}
  \WE(E){C}
  \EA(E){D}
  \Edge[label = $T'$](C)(E)
  \Edge[label = $T$](E)(D)
\end{tikzpicture}
\end{center}

where the strict transforms of $C$ and $D$ are still called $C$ and $D$, and where $E$ is the inverse image of $x$.

\item If $e$ is a loop, \textit{i.e.}\ $x$ belongs to only one irreducible component $L$ of $X_s$, then $\Gamma'$ is obtained from $\Gamma$ by replacing $e$ by a cycle

\begin{center}
\begin{tikzpicture}
  \SetGraphUnit{3}
  \Vertex{L}
  \EA(L){E}
  \Edge[label = $T'$](L)(E)
  \Edge[label = $T$](E)(L)
\end{tikzpicture}
\end{center}

where the strict transform of $L$ is still called $L$ and $E$ is the inverse image of $x$.

\end{itemize}

\end{lem}

\begin{proof}
The ideal sheaf of $\sigma$ is already Cartier above the smooth locus of $X/S$ and outside the image of $\sigma$, so by the universal property of blow-ups (\textit{cf.} \cite[\href{https://stacks.math.columbia.edu/tag/0806}{Tag 0806}]{stacks-project}), we only need to describe $\phi$ above the \'etale localizations $\spec\O_{X,x}^{\,\on{\acute{e}t}}$, where $x,s,(C,D)$ are as in the statement of the lemma. We can assume that $S=\spec R$ is strictly local, with closed point $s$. Lift $T$ and $T'$ to global sections $\Delta,\Delta'$ of $S$, and pick an isomorphism
\[
\widehat{\O_{X,x}^{\,\on{\acute{e}t}}}=\widehat{R}[[u,v]]/(uv-\Delta\Delta')
\]
such that $C$ is locally given by $u=0$. The map
\[
\widehat{\sigma}\colon\widehat{\O_{X,x}^{\,\on{\acute{e}t}}}\ra\widehat{R}
\]
induced by $\sigma$ sends $u$ to a generator of $\Delta \widehat R$ and $v$ to a generator of $\Delta' \widehat R$. Scaling $u$ and $v$ by a unit of $\widehat R$ if necessary, we can assume $\widehat{\sigma}(u)=~\Delta$ and $\widehat{\sigma}(v)=~\Delta'$.

The completed local rings of $\spec \O_{X,x}^{\,\on{\acute{e}t}}\times_X X'$ can be computed using the blow-up of the algebra $B:=R[u,v]/(uv-\Delta\Delta')$ in the ideal $(u-\Delta,v-\Delta')$ (since the completion of $B$ at $(u,v,\m_R)$ is $\widehat{\O_{X,x}}$).
	
The ideal $(u-\Delta,v-\Delta')$  
is covered by two affine patches (with the obvious gluing maps):
	
	\begin{itemize} 
	
	\item The patch where $u-\Delta$ is a generator, given by the spectrum of
	\[
	R[u,v,\alpha]/((v-\Delta')-\alpha(u-\Delta),u\alpha+\Delta')\simeq R[u,\alpha]/(u\alpha+\Delta')
	\]
	since, in the ring $R[u,v,\alpha]/((v-\Delta')-\alpha(u-\Delta))$, the element $uv-\Delta\Delta'$ is equal to $(u-\Delta)(u\alpha+\Delta')$; 
	
	\item The patch where $v-\Delta'$ is a generator, where we obtain analogously the spectrum of
	\[
	R[v,\beta]/(v\beta+\Delta).
	\]
	\end{itemize}
	Thus we see that $X'$ remains nodal and that the edge $e$ of $\Gamma$ (of label $(\Delta\Delta')$) is replaced in $\Gamma'$ by a chain of two edges, one labelled $(\Delta)$ and one labelled $(\Delta')$. It also follows from this description that the strict transform of $C$ (resp.\ $D$) in $X'\times_X \spec\widehat{\O_{X,x}^{\,\on{\acute{e}t}}}$ contains the singularity of label $(\Delta')$ (resp.\ $(\Delta)$).
\end{proof}

\begin{cor}\label{corollary:unicite_zariski_locale_des_T-refinements}
With the same hypotheses and notation as in Lemma~\ref{structure des (C,delta)-blowups}, for any two sections $\sigma,\sigma'$ of $X/S$, denote by $Y\to X$ and $Y'\to X$ the blow-ups in the respective ideal sheaves of $\sigma$ and $\sigma'$. Then, $Y$ and $Y'$ are canonically isomorphic above the same type locus $X_{\sigma,\sigma'}$ of $\sigma$ and $\sigma'$ in $X$. Conversely, if $x$ is in $X\backslash X_{\sigma,\sigma'}$, then $Y$ and $Y'$ are not isomorphic above $\O_{X,x}$.
\end{cor}

\begin{proof}
The "conversely" part is immediate from Lemma \ref{structure des (C,delta)-blowups}. Pick a point $s\to S$ and a singular point $x$ of $X_s$ such that $\sigma(s)=\sigma'(s)=x$ and $\sigma,\sigma'$ have the same type $T$ at $x$. It suffices to exhibit a Zariski neighbourhood $V$ of $x$ in $X$ and an isomorphism $Y\times_X V \to Y'\times_X V$ compatible with the canonical isomorphisms $Y\times_X X^{\mathrm{smooth}}=X^{\mathrm{smooth}}=Y'\times_X X^{\mathrm{smooth}}$. Since $X,Y,Y'$ are finitely presented over $S$, this can be done assuming $S=\spec R$ is strictly local, with closed point $s$. Using the universal property of blow-ups (\textit{cf.} \cite[\href{https://stacks.math.columbia.edu/tag/0806}{Tag 0806}]{stacks-project}), we reduce to proving that the pull-back of the ideal sheaf of $\sigma'$ (resp.\ $\sigma$) to $Y$ (resp.\ $Y'$) is Cartier. The proofs are analogous, so we will only show that the pull-back to $Y$ of the ideal sheaf of $\sigma'$ is Cartier. This, in turn, reduces to proving that the ideal sheaf of $\sigma'$ in $\spec\widehat{\O_{X,x}^{\,\on{\acute{e}t}}}$ becomes Cartier in $Y\times_X \spec\widehat{\O_{X,x}^{\,\on{\acute{e}t}}}$. Pick an isomorphism
\[
\widehat{A}:=\widehat{R}[[u,v]]/(uv-\Delta_x)\simeq\widehat{\O_{X,x}^{\,\on{\acute{e}t}}}\,,
\]
where $\Delta_x\in R$ is a lift of the thickness of $x$. The map
\[
\widehat A \to \widehat{R}
\]
corresponding to $\sigma$ sends $u,v$ to elements $\Delta,\Delta'$ of $\widehat{R}$ with $\Delta\Delta'=\Delta_x$. Since $\sigma$ and $\sigma'$ have the same type at~$x$, there is a unit $\lambda\in\widehat{R}^\times$ such that the map
\[
\widehat A \to \widehat{R}
\]
corresponding to $\sigma'$ sends $u$ and $v$ to $\lambda\Delta$ and $\lambda^{-1}\Delta'$, respectively. We have reduced to proving that the sheaf given by the ideal $(u-\lambda\Delta, v-\lambda^{-1}\Delta')$ of $\widehat A$ becomes Cartier in the blow-up of $\widehat{A}$ in $(u-\Delta,v-\Delta')$. Put
\[
A=\widehat{R}[u,v]/(uv-\Delta\Delta');
\]
then it is enough to prove that the ideal $I=(u-\lambda\Delta, v-\lambda^{-1}\Delta')$ of $A$ becomes invertible in the two affine patches (as described in the proof of Lemma \ref{structure des (C,delta)-blowups}) forming the blow-up of $A$ in $(u-\Delta,v-\Delta')$. By analogy, we only check it in the patch where $u-\Delta$ is a generator, which is the spectrum of
\[
A_1=\widehat{R}[u,\alpha]/(u\alpha+\Delta'),
\]
where $v$ maps to $\Delta'+\alpha(u-\Delta)$. We have $I=(u-\lambda\Delta,\lambda v-\Delta')$, and in $A_1$ we can write
\begin{align*}
\lambda v -\Delta' & = \lambda (\Delta'+\alpha(u-\Delta)) +u\alpha \\
& = -\lambda\alpha\Delta+u\alpha \\
& = \alpha(u-\lambda\Delta).
\end{align*}
Thus, the preimage of $I$ in $A_1$ is the invertible ideal $(u-\lambda\Delta)$, and we are done.
\end{proof}

\begin{defi}\label{definition refinement}
Let $S$ be a smooth-factorial scheme and $X/S$ a quasisplit nodal curve, smooth over a dense open subscheme $U$ of $S$. We call \emph{basic refinement} of $X/S$ any morphism $f \colon X' \to X$ isomorphic to the blow-up of $X$ in the ideal sheaf of a section $\sigma\colon S \to X$. If $X/S$ is orientable at a point $x$ above which $f$ is not an isomorphism, it follows from Corollary \ref{corollary:unicite_zariski_locale_des_T-refinements} that the type $T$ of $\sigma$ at $x$ relative to an orientation $(C,D)$ is independent of the choice of $\sigma$: we say that $T$ is the \emph{type of $X' \to X$ at $x$}, or that $X' \to X$ is a \emph{basic $T\!$-refinement} (at $x$, relative to $(C,D)$).

We call \emph{refinement} of $X/S$ any morphism $f \colon X' \to X$ which, Zariski locally on $S$, is a composition of basic refinements.
\end{defi}

\begin{rem}\label{remarque les raffinements asymetriques existent et-localement et base change compatibles}
  \leavevmode
\begin{itemize}

\item If $S$ is excellent, then any geometric point $s\in S$ has an admissible neighbourhood $V$ by Proposition \ref{proposition:admissibles_are_a_basis}, so $X_V/V$ has a basic $T\!$-refinement for any type $T$ at any singular point of $X_s$.

\item Consider any morphism $S' \to S$, where $S'$ is still smooth-factorial (\textit{e.g.}\ any smooth map $S' \to S$). Let $x$ be a singular point of $X$ and $x'$ a singular point of $X'$ of image $x$. Then any type $T$ at $x$ pulls back to a type $T'$ at $x'$, and the base change to $S'$ of a basic refinement of type $T$ at $x$ is a basic refinement of type $T'$ at $x'$.

\item Let $f \colon X' \to X$ be a basic refinement, let $x \in X$ be a singular point at which $X/S$ is orientable and above which $f$ is not an isomorphism, and let $y$ be a generization of $x$. Let $T$ be the type of $f$ at~$x$. By Lemma \ref{lemma:type_passes_to_generization}, either $T$ corresponds to a type (still denoted by $T$) at $y$, in which case $X'\to X$ has type $T$ at $y$, or $T$ becomes trivial at $y$, in which case $f$ restricts to an isomorphism above a Zariski neighbourhood of $y$.

\end{itemize}
\end{rem}

\begin{prop}\label{proposition:resolutions}
Let $S$ be a smooth-factorial and excellent scheme and $U\subset S$ a dense open subscheme. Let $X/S$ be a nodal curve, smooth over $U$. Suppose that $S$ is an admissible neighbourhood of some point $s\in S$. Then, there exists a refinement $X' \to X$ such that all the singularities of $X'_s$ have prime thicknesses.
\end{prop}

\begin{proof}
By the definition of admissibility, $S$ remains an admissible neighbourhood of $s$ if we replace $X$ with a basic refinement. If $X' \to X$ is a basic refinement which is not an isomorphism above $s$, then by Lemma~\ref{structure des (C,delta)-blowups}, the arithmetic complexity of $X'$ at $s$ is strictly lower than that of $X$, so we obtain the proposition by induction.
\end{proof}

\begin{lem}\label{lemma:curves_with_prime_labels_are_loc_factorial}
Let $S$ be a smooth-factorial and excellent scheme and $U\subset S$ a dense open subscheme. Let $X/S$ be a quasisplit nodal curve, smooth over $U$. Let $x$ be a singular point of $x$ with prime thickness. Then $X$ is locally factorial at $x$. In particular, if $s\in S$ is such that all the singular points of $X_s$ have prime thicknesses, then $X\times_S \spec \O_{S,s}$ is locally factorial.
\end{lem}

\begin{proof}
Let $t$ be the image of $x$ in $S$. Then $\widehat{\O_{S,t}}$ is a unique factorization domain by Lemma~\ref{lemma:Popescu}. Thus, $\widehat{\O_{X,x}^{\,\on{\acute{e}t}}}$ is a unique factorization domain by Lemmas~\ref{lemme les irreductibles au complete sont les irreductibles} and~\ref{les anneaux locaux sont UFD}. Therefore, $\O_{X,x}$ itself is a unique factorization domain by Lemma \ref{lemma:factoriality_descends_under_faithfully_flat_ring_ext}.
\end{proof}

\section{N\'eron models of Jacobians}\label{sec6}

If $X\to S$ is a morphism of schemes, its relative Picard functor is the fppf sheafification of the functor sending an $S$-scheme $T$ to the group of isomorphism classes of line bundles on $X_T$. When $X/S$ is a nodal curve, by~\cite[Theorems~8.3.1 and~9.4.1]{NeronModels}, the Picard functor is representable by a smooth, quasiseparated $S$-group algebraic space $\pic_{X/S}$, the \emph{Picard space}. We write $\pic^{\on{tot}0}_{X/S}$ for the kernel of the degree map from $\pic_{X/S}$ to the constant sheaf $\Z$ on $S$ and $\pic^{0}_{X/S}$ for the fibrewise-connected component of identity of $\pic_{X/S}$, parametrizing line bundles of degree $0$ on every irreducible component of every fibre.

A classical way of obtaining a N\'eron model for the Jacobian $J$ of a proper smooth curve $X_U/U$ with a nodal model $X/S$, when $X$ is "nice enough", is to consider the quotient $P/E$, where $P=\pic^{\on{tot}0}_{X/S}$ and $E$ is the closure of the unit section in $P$, so that $P/E$ is the biggest separated quotient of $P$ (see, for example,~\cite[Section~9.5]{NeronModels}). This works well when three conditions are met: $P$ is representable by an $S$-algebraic space, $E$ is flat over $S$ (so that the quotient is also representable), and $\pic^{\on{tot}0}_{X/S}$ satisfies existence in the N\'eron mapping property (\textit{e.g.}\ $X$ is regular). However, this approach fails most of the time when $S$ is of arbitrary dimension since $E$ is rarely $S$-flat (\textit{cf.} \cite[Theorem 5.17]{Holmes}). The reason is that this method is designed to produce \emph{separated} N\'eron models, and most N\'eron models over higher-dimensional bases turn out to be non-separated.

In this section, we will work assuming $S$ is a smooth-factorial scheme and $U\subset S$ a dense open subscheme. In view of Corollary \ref{corollary:quotient_of_group_space_by_E^et_has_uniqueness_in_NMP}, it is tempting to try to construct the N\'eron model as the quotient of $P$ by the \'etale locus of $E/S$. This works when $P$ has existence in the N\'eron mapping property, \textit{i.e.}\ when $X$ is parafactorial along $X_U$ after any smooth base change (\textit{e.g.}\ regular). However, even if $X_U$ has nodal models, it may be that none of them remains parafactorial after every smooth base change. We will construct a N\'eron model $N$ for~$J$ when $X/S$ is arbitrary and give a local description of $N$ in terms of Picard spaces of local nodal models of~$X_U$. Then, we will give a simple combinatorial criterion for $N$ to be separated, related to the alignment condition of~\cite{Holmes}.

\subsection{Construction of the N\'eron model}

\begin{rem}
Let $S$ be a smooth-factorial and excellent scheme, $U\subset S$ a dense open subscheme and $X/S$ a quasisplit nodal curve, smooth over $U$. Suppose that every singular point of $X/S$ has prime thickness. Then $X/S$ is locally factorial by Lemma \ref{lemma:curves_with_prime_labels_are_loc_factorial}, so any $U$-point of $P:=\pic^{\on{tot}0}_{X/S}$ extends to an $S$-section. By Corollary \ref{corollary:irred_of_etale_loc_ring_stable_under_smooth_maps}, this remains true after base change to any smooth $S$-scheme, so $P$ satisfies existence in the N\'eron mapping property. Thus, by Corollary \ref{corollary:quotient_of_group_space_by_E^et_has_uniqueness_in_NMP}, the quotient of $P$ by the \'etale locus of the closure of its unit section is the N\'eron model of the Jacobian of $X_U$. However, we cannot always use Lemma \ref{proposition:admissibles_are_a_basis} and Proposition \ref{proposition:resolutions} to reduce locally to this situation since some singular points of $X/S$ with prime thickness may have singular generizations whose thickness is not prime (see Example \ref{example:irred_not_etale_irred}).
\end{rem}

\begin{lem}\label{proposition:refinements_induce_open_immersions_of_Pics}
Let $S$ be a smooth-factorial excellent scheme and $U\subset S$ a dense open subscheme. Let $X/S$ be a nodal curve smooth over $U$, and let $f$ be a refinement $X' \to X$. Write $P=\pic^{\on{tot}0}_{X/S}$ and $P'=\pic^{\on{tot}0}_{X'/S}$. Denote by $E$ $($resp.\ $E')$ the scheme-theoretic closure of the unit section in $P$ $($resp.\ $P')$. Then, the canonical morphisms $P \to P'$, $P/E \to P'/E'$ and $P/E^{\mathrm{\acute{e}tale}}\to P'/E'^{\,\mathrm{\acute{e}tale}}$ are open immersions. In addition, if $X/S$ is quasisplit and $f$ is an isomorphism above every singular point of $X/S$ which is not disconnecting in its fibre, then $P/E \to P'/E'$ is an isomorphism.
\end{lem}

\begin{proof}
When $X/S$ is quasisplit, $\sing(X/S)$ is the disjoint union of its open and closed subschemes consisting of points that are, respectively, disconnecting in their fibre and non-disconnecting in their fibre. This partition is compatible with base change and refinements, by Lemma \ref{structure des (C,delta)-blowups} and Proposition \ref{graphes duaux et changement de base}. By Corollary~\ref{corollary:curves_are_QS_over_some_etale_cover}, we may assume that $X/S$ and $X'/S$ are quasisplit. By induction, we may assume that $f$ is a basic refinement. By the proof of Lemma \ref{structure des (C,delta)-blowups}, we may therefore assume that there exists a closed subscheme $F$ of $\sing(X/S)$ such that $f$ is an isomorphism above $X\backslash F$ and such that $X'\times_X F\simeq\P^1_F$. Hence, the pull-back along $f$ induces an equivalence of categories between line bundles on $X$ and line bundles on $X'$ having degree $0$ on every irreducible component of every fibre of $X'\times_X F \to F$. In particular, we have a canonical isomorphism $\pic^0_{X/S}=\pic^0_{X'/S}$. As $\pic^0_{X/S}$ is an open neighbourhood of the unit section in $P$ (and similarly for $P'$), it follows that $P \to P'$ is a local isomorphism. Since it is also injective, it is an open immersion. It follows that all squares are cartesian in the commutative diagram
\[
\xymatrix{
E^{\mathrm{\acute{e}tale}} \ar[r]\ar[d] & E'^{\mathrm{\acute{e}tale}} \ar[d] \\
E \ar[r]\ar[d] & E' \ar[d] \\
P \ar[r] & P'\rlap{.}
}
\]
Therefore, $P/E \to P'/E'$ and $P/E^{\mathrm{\acute{e}tale}} \to P'/E'^{\mathrm{\acute{e}tale}}$ are open immersions as well.

We now prove that $P/E \to P'/E'$ is surjective, assuming that every point of $F$ is disconnecting in its fibre over~$S$. This may be checked at the level of \'etale stalks over $S$: it suffices to show that $P(S)$ surjects onto $P'/E'(S)$, assuming that $S=\spec R$ is strictly local with closed point $s$. If $F$ is empty, we are done. Otherwise, $F_s$ is a disconnecting singular point $x$ of $X_s$, and a line bundle on $X'$ is in the image of $P(S)$ if and only if its restriction to $X'\times_X F$ is trivial, \textit{i.e.}\ if and only if it has degree $0$ on $X'\times_X x\simeq\mathbb P^1_x$. Therefore, it suffices to show that $E'$ contains a line bundle $\mathcal L$ of degree $1$ on $X'\times_X x\simeq\mathbb P^1_x$. Let $x'$ be a singular point of $X'_x$, and let $\Delta\in R$ be a lift of its thickness. Let $E$ be the connected component of $\sing(X'/S)$ containing~$x'$. The map $E \to S$ is a closed immersion cut out by $\Delta$, and $\left(X'\times_S E)\right)\backslash E$ has two connected components. Denote by $C$ the one whose fibre over $x$ is nonempty and $D$ the other one. The scheme-theoretic closures of $C$ and $D$ in $X'$ are effective Cartier divisors, which we still call $C$ and $D$ (to see that they are Cartier at $x'$, notice that they coincide locally with an orientation at $x'$ as in Definition \ref{definition:orientation}). Let $\mathcal{L}$ be the line bundle corresponding to $D$. Then $\mathcal{L}$ has degree $1$ on $X'\times_X x$ since $C$ and $D$ meet transversally at $x'$, and the $S$-point of $P'$ corresponding to $\mathcal{L}$ is in $E'$ since $\mathcal{L}$ is trivial over $U$, so we are done.
\end{proof}

\begin{lem}\label{lemma:line_bundles_on_loc_rings_extend_locally}
Let $X \to S$ be a proper and finitely presented morphism of schemes, $s$ be a point of $S$ and $\mathcal{L}$ be a line bundle on $X^s:=X \times_S \spec(\O_{S,s})$. Then there exists a Zariski open neighbourhood $S'$ of $s$ in $S$ such that $\mathcal{L}$ extends to a line bundle on $X\times_S S'$.
\end{lem}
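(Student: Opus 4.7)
The plan is to prove this by a classical spreading-out argument. First, write $\spec(\O_{S,s})$ as the cofiltered limit of the affine open neighbourhoods $(U_i)_{i\in I}$ of $s$ in $S$, with inclusions as transition morphisms. Base-changing along $X \to S$ (which is of finite presentation), one obtains $X^s = \varprojlim X_i$ where $X_i := X\times_S U_i$, the transition maps are affine and flat, and each $\pi_i \colon X_i \to U_i$ is proper and finitely presented, hence quasi-compact and quasi-separated.

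The first step would be to extend $\mathcal{L}$ as a quasi-coherent sheaf of finite presentation to some $X_{i_0}$. Since $\mathcal{L}$ is of finite presentation, the general limit formalism of \cite[\S 8]{EGA4.3} produces an index $i_0 \in I$ and a finitely presented quasi-coherent sheaf $\mathcal{M}$ on $X_{i_0}$ whose pull-back to $X^s$ is isomorphic to $\mathcal{L}$.

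The second step is to shrink $U_{i_0}$ so that $\mathcal{M}$ becomes a line bundle. The locus $V \subset X_{i_0}$ where $\mathcal{M}$ is locally free of rank $1$ is open, so $Z := X_{i_0}\setminus V$ is closed; by properness of $\pi_{i_0}$, the image $\pi_{i_0}(Z) \subset U_{i_0}$ is closed. It will suffice to check that $s \notin \pi_{i_0}(Z)$: otherwise there would exist $z\in Z$ with $\pi_{i_0}(z)=s$, and the fibre of $X^s \to X_{i_0}$ over $z$ is nonempty (it equals $\spec k(z)$), producing a point $z'\in X^s$ such that the map $\O_{X_{i_0}, z} \to \O_{X^s, z'}$ is flat and local, hence faithfully flat. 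Since $\mathcal{L}_{z'} = \mathcal{M}_z \otimes_{\O_{X_{i_0}, z}} \O_{X^s, z'}$ is free of rank $1$, faithfully flat descent of freeness for finitely presented modules over local rings would force $\mathcal{M}_z$ to be free of rank $1$, contradicting $z\in Z$. Setting $S' := U_{i_0}\setminus \pi_{i_0}(Z)$ then yields the desired Zariski open neighbourhood.

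The main technical point will be the careful invocation of the limit formalism for finitely presented quasi-coherent modules in the first step; once that is in hand, the closure argument in the second step is essentially immediate from properness of $\pi_{i_0}$, the standard openness of the rank-$1$ locus, and faithfully flat descent of freeness.
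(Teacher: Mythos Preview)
Your proof is correct and takes a genuinely different route from the paper's. The paper represents $\mathcal{L}$ explicitly by a Cartier divisor $\{(U_i,f_i)\}$ on $X^s$ with finite index set (by quasi-compactness), lifts each $f_i$ to a section $g_i$ of $\mathcal{K}_X$ on an open $V_i\subset X$ with $U_i\subset V_i$, shrinks the $V_i$ until the transition ratios $g_i/g_j$ are units on the overlaps, and then uses properness of $X/S$ to produce an open $S'\ni s$ such that the $V_i$ cover $X_{S'}$; the $\{(V_i,g_i)\}$ then form a Cartier divisor extending $D$. Your argument instead spreads out $\mathcal{L}$ as a finitely presented quasi-coherent module via the limit formalism of \cite[\S 8]{EGA4.3}, and then uses openness of the locally-free-of-rank-one locus together with properness to cut down to the desired neighbourhood. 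Your approach is slightly more robust in that it does not presuppose that $\mathcal{L}$ is represented by a Cartier divisor (which is not automatic for arbitrary proper finitely presented $X^s$), whereas the paper's argument is more elementary and self-contained, avoiding any appeal to the heavy EGA machinery.
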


\begin{proof}
Pick a Cartier divisor $D=\{(U_i,f_i)\}_{i\in I}$ representing $\mathcal{L}^t$. Since $X^s$ is quasicompact, we may assume that the index set $I$ is finite. For each $i\in I$, pick an open subscheme $V^i$ of $X$ containing $U_i$ and a $V_i$-section $g_i$ of $\mathcal{K}_X:=\Frac\O_X$ restricting to $f_i$. Shrinking $V_i$ if necessary, we may assume that $g_i/g_j$ is in $\O_X^\times(V_i\cap V_j)$. The union of the $V_i$ is an open subset $V$ of $X$ containing $X_t$. Therefore, since $X/S$ is proper, the image of $X\backslash V$ in $S$ is a closed subset not containing $t$, and its complement $S'$ is an open neighbourhood of $t$ in $S$. The $V_i$ cover $X_{S'}$, so $\{(V_i,g_i)\}$ is a Cartier divisor on $X_{S'}$ which restricts to $D$, and the corresponding line bundle extends $\mathcal L$.
\end{proof}

\begin{lem}\label{lemma:equivalence_classes_loc_constructible}
Let $S$ be a smooth-factorial and excellent scheme, $U\subset S$ a dense open subscheme and $X/S$ a nodal curve, smooth over $U$. For any geometric point $\bar s \to S$, write $J_{\bar s}$ for the set of prime factors of thicknesses of singular points of $X_{\bar s}$ and $\overline M_{\bar s}$ for the submonoid of $\overline{O_{S,\bar s}^{\on{\acute{e}t}}}$ spanned by $J_{\bar s}$. Consider the relation $R$ on $S$ given by $sRt$ whenever $t$ specializes $s$ and for some $($equivalently, any$)$ \'etale specialization $\bar t$ of $\bar s$, where $\bar t, \bar s$ are geometric points above $s$ and $t$, the restriction map of \'etale stalks induces a canonical isomorphism between $J_{\bar t}$ and $J_{\bar s}$. Denote by $\sim_S$ the transitive closure of $R$. Then, the equivalence classes for $\sim_S$ are locally constructible subsets of $S$.
\end{lem}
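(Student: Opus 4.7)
The plan is to show that, locally, each equivalence class is a connected component of an explicit closed subset of $S$, and then to invoke the fact that connected components of closed sets in locally Noetherian schemes are locally constructible. Since local constructibility is \'etale-local, by Proposition \ref{proposition:admissibles_are_a_basis} it suffices to treat the case where $S$ is an admissible neighbourhood of a given point $s_0 \in S$ with geometric lift $\bar s_0$. Zariski-shrinking around $s_0$, I would arrange that $X/S$ is quasisplit with $\sing(X/S) = \coprod_i F_i$ (closed immersions through $s_0$); that the thicknesses decompose as $\Delta_i = u_i \prod_j \pi_j^{e_{i,j}}$ with $u_i \in \O(S)^\times$ and global sections $\pi_1,\ldots,\pi_n \in \O(S)$ that are pairwise non-associate primes at $\bar s_0$; that, after pruning each $V(\pi_j)$ to its irreducible components through $s_0$, the $\pi_j$ remain pairwise non-associate at every $\bar t \in S$ where both are non-units; and that each $\pi_j$ is prime in $\O^{\on{et}}_{S,\bar t}$ for every $t \in V(\pi_j)$. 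The last arrangement is possible because the locus of geometrically unibranch points on $V(\pi_j)$ is open (by upper-semicontinuity of the branch count) and contains $s_0$.

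With these arrangements in place, I would compute $J_{\bar t} = \{\pi_j|_{\bar t} : j \in I(t)\}$ with $|J_{\bar t}| = |I(t)|$, where $I(t) := \{j : \pi_j(t) = 0\}$, and observe that the restriction map of \'etale stalks sends $\pi_j|_{\bar t}$ to $\pi_j|_{\bar s_0}$. Unwinding the definition of $R$: for $t$ specializing $s_0$, the relation $s_0 R t$ holds automatically, since $t \in \overline{\{s_0\}} \subseteq W := \bigcap_j V(\pi_j)$ forces $I(t) = \{1,\ldots,n\}$, and for $r$ generizing $s_0$, the relation $r R s_0$ holds if and only if $r \in W$. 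Hence the $R$-neighbours of $s_0$ in $S$ (in either direction) are exactly the points of $W$ comparable to $s_0$.

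By transitivity, $[s_0]_{\sim_S} \cap S$ will be the set of points of $W$ reachable from $s_0$ via a chain of pairwise-comparable points in $W$, which for the locally Noetherian closed subset $W$ coincides with the connected component of $s_0$ in $W$: any two points of an irreducible component are chain-connected via its generic point, and two irreducible components sharing a point are chain-connected through that point. Since $W$ is closed in $S$ and has locally finitely many irreducible components, its connected components are closed in $W$, hence locally constructible in $S$. The hard part will be the step-by-step unpacking of the bijection condition in the definition of $R$ in terms of the $\pi_j$, together with arranging the Zariski shrinkings simultaneously: ensuring each $\pi_j$ remains prime throughout $V(\pi_j)$ rests on upper-semicontinuity of the branch count, and ensuring non-associate-ness away from $s_0$ requires pruning $V(\pi_j)$ to its components through $s_0$.
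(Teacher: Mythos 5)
Your plan rests on the assertion that ``the locus of geometrically unibranch points on $V(\pi_j)$ is open (by upper-semicontinuity of the branch count) and contains $s_0$.'' That is precisely what fails, and it is the reason the paper's proof invokes \cite[Corollaire 9.7.10]{EGA4.3} to get only \emph{constructibility} of the unibranch locus, not openness. The branch count is not upper-semicontinuous: Example \ref{example:irred_not_etale_irred} in the paper exhibits $R=\C[[u,v,w]]$, $\Delta=u^2(v-w)-v^2(u+w)$ prime in $R$, but with two prime factors in the \'etale local ring at the generization $t$ corresponding to $(u,v)$. So $V(\Delta)$ is geometrically unibranch at the closed point $s_0$ but not at the generization $t \in V(\Delta)$ --- the unibranch locus is not a neighbourhood of $s_0$ in $V(\Delta)$, and no Zariski shrinking around $s_0$ removes this defect. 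Your proposed arrangement that ``each $\pi_j$ is prime in $\O^{\on{et}}_{S,\bar t}$ for every $t\in V(\pi_j)$'' therefore cannot be realized.

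Because of this, the characterization of the equivalence class as a connected component of $W=\bigcap_j V(\pi_j)$ is not just unproven --- it is false. In the same example, $t$ is a generization of $s_0$ and lies in $W$, hence in the connected component of $s_0$ in $W$, but $t\not\sim_S s_0$: the restriction map $\O_{S,\bar t}^{\on{et}}\to\O_{S,\bar s_0}^{\on{et}}$ cannot induce a bijection $J_{\bar t}\to J_{\bar s_0}$ since $J_{\bar t}$ has two elements and $J_{\bar s_0}$ has one, and in fact the restriction of $\Delta\in J_{\bar s_0}$ is not even an element of $J_{\bar t}$ because $\Delta$ is no longer prime there. The paper instead identifies $C_s$ with the intersection $\bigcap_i Z_i^{\on{uni}}$, where $Z_i^{\on{uni}}$ is the (merely constructible) geometrically unibranch locus of $V(\Delta_i)$; this is a strictly smaller set than the connected component of $W$ through $s_0$ in general. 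The initial reduction to an admissible neighbourhood and the lifting of prime factors to global $\Delta_i$ is in the spirit of the paper's argument, but the subsequent steps replace the correct constructible description with a closedness claim that does not hold.
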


\begin{proof}
We immediately reduce to the following claim: given a point $s \in S$, if we denote by $C_s$ its equivalence class for $\sim_S$, then the intersection of $C_s$ with a small enough Zariski neighbourhood of $s$ in $S$ is locally constructible in $S$. We will now prove the claim. For any \'etale neighbourhood $(V,v)$ of $s$, the preimage of $C_s$ in $X_V$ has a locally finite number of connected components, all of which are classes for $\sim_{V}$ (where $\sim_{V}$ is defined as $\sim_S$, but after replacing $X/S$ by $X_V/V$). Therefore, there exists a Zariski neighbourhood $(W,w)$ of $v$ in $V$ such that the preimage of $C_s$ in $W$ is the class $C_w$ of $w$ for $\sim_W$. It follows that the image of $C_w$ in $S$ is the intersection of $C_s$ with an open of $S$ (namely, the image of $W$). Thus, the claim may be proved after replacing $(X/S,s)$ by $(X_W/W,w)$. In particular, we may assume that $S$ is an admissible neighbourhood of $s$ (since admissibility is preserved by \'etale localization).

Pick a geometric point $\bar s$ above $s$. The singular locus of $X/S$ has finitely many connected components $(F_1,\ldots,F_r)$, and each $F_i \to S$ is cut out by a global section $a_i$ of $\O_S$. Since $S$ is an admissible neighbourhood of $s$, the elements of $J_{\bar s}$ lift to global sections $\Delta_1,\ldots,\Delta_n$ of $\O_S$. Shrinking $S$ further if necessary, we may assume that every $\Delta_i$ divides some $a_j$ in $\O_S(S)$ (and not just in $\O_{S,s}^{\,\on{\acute{e}t}}$). Recall that an integral scheme $Y$ is called \emph{geometrically unibranch} at a point $y\in Y$ if the strict henselization of $\O_{Y,y}$ is integral or, equivalently by~\cite[corollaire~IX.1]{Raynaud}, if the integral closure of $\O_{Y,y}$ is local with purely inseparable residue extension over $\O_{Y,y}$. Denote by $Z_i$ the closed subscheme of $S$ cut out by $\Delta_i$. The set of points $Z_i^{\on{uni}}$ at which $Z_i$ is geometrically unibranch is locally constructible in $Z_i$ by~\cite[corollaire 9.7.10]{EGA4.3}. Therefore, the intersection in $S$ of the images of the $Z_i^{\on{uni}}$ for all $i$ is locally constructible in $S$. This intersection is precisely $C_s$, so we are done.
\end{proof}

\begin{rem}
With the hypotheses and notation of Lemma \ref{lemma:equivalence_classes_loc_constructible}, the equivalence classes for $\sim_S$ form a partition of $S$ into locally constructible subsets. In particular, locally on $S$, there are only finitely many such classes. Since $\sim_S$ only depends on $X$ via the sets $J_{\bar s}$, it remains unchanged if we replace $X$ with a refinement.
\end{rem}

\begin{thm}\label{theorem:NMs_Jac}
Let $S$ be a smooth-factorial and excellent scheme, and let $U\subset S$ be a dense open subscheme. Let $X_U/U$ be a smooth curve that admits a nodal model over $S$. Then:

\begin{enumerate}

\item \label{thm7.6-1} The Jacobian $J=\pic^0_{X_U/U}$ of $\,X_U/U$ admits a N\'eron model $N$ over $S$.

\item \label{thm7.6-2} For any nodal model $X/S$ of $\,X_U/U$, the map $\pic^{\on{tot}0}_{X/S}/E^{\mathrm{\acute{e}tale}} \to N$ extending the identity over $U$ is an open immersion, where $E$ is the scheme-theoretic closure of the unit section in $\pic^{\on{tot}0}_{X/S}$.

\item \label{thm7.6-3} For any \'etale morphism $V \to S$ and nodal $V$-model $X$ of $X_{U\times_S V}$, if $\bar s \to V$ is a geometric point such that the singularities of $X_{\bar s}$ have prime thicknesses, then the canonical map $\pic^{\on{tot}0}_{X/V} \to N$ is surjective on $\spec(\O_{S,\bar s}^{\,\on{\acute{e}t}})$-points.
\end{enumerate}

\end{thm}

\begin{rem}\label{remark:N_covered_by_some_Pic/E_after_etale_cover}
In the setting of Theorem \ref{theorem:NMs_Jac}, if $X^0$ is any nodal model of $X_U$, by Propositions~\ref{proposition:admissibles_are_a_basis} and~\ref{proposition:resolutions}, there exist an \'etale cover $V \to S$ and a refinement $X \to X^0_V$ with the following property: for any $s\in S$, there is some geometric point $v \to V$ above $s$ such that the singularities of $X_v$ have prime thicknesses. In particular, it follows from Theorem \ref{theorem:NMs_Jac} that the canonical map $\pic^{\on{tot}0}_{X/V} \to N$ is an \'etale cover.
\end{rem}

\begin{proof}[Proof of Theorem \ref{theorem:NMs_Jac}]
  Recall that the formation of N\'eron models is smooth local on the base (\textit{cf.} Propositions~\ref{changement de base lisse} and~\ref{proposition descente lisse des NM}), that the properties of morphisms "being \'etale" and "being an open immersion" are fpqc local on the target (\textit{cf.} \cite[\href{https://stacks.math.columbia.edu/tag/02L3}{Tags 02L3} and \href{https://stacks.math.columbia.edu/tag/02VN}{02VN}]{stacks-project}) and that for a nodal curve $X/S$, the formation of $\pic^{\on{tot}0}_{X/S}$ and of the closure of its unit section commute with flat base change. In particular, claims~\eqref{thm7.6-1}, \eqref{thm7.6-2} and \eqref{thm7.6-3} of the theorem hold if and only if they hold \'etale-locally on $S$.

First, let us assume \eqref{thm7.6-1} and~\eqref{thm7.6-2} and prove \eqref{thm7.6-3}. Let $X,V, \bar s$ be as in \eqref{thm7.6-3}, and put $P=\pic^{\on{tot}0}_{X/V}$ and $T=\spec\O_{S,\bar s}^{\,\on{\acute{e}t}}$. It follows from \eqref{thm7.6-2} that $P \to N$ is \'etale, and we only need to show that it is surjective on $T$-points. Pick $f_U\in N(T)$. Then, $f_U$ corresponds to a line bundle $\mathcal{L}_U$ on $X_{T_U}$. The curve $X_T$ is locally factorial by Lemma~\ref{lemma:curves_with_prime_labels_are_loc_factorial}. Pick a Weil divisor $D$ on $X_U$ representing $\mathcal{L}_U$. Its closure $\overline D$ in $X_T$ is Cartier by local factoriality, hence defines a line bundle $\mathcal{L}$ extending $\mathcal{L}_U$, \textit{i.e.}\ a $T$-point of $P$ mapping to $f_U$.

Now, it suffices to prove that \eqref{thm7.6-1} and \eqref{thm7.6-2} hold. Fix a nodal $S$-model $X$ of $X_U$. We say that a smooth $S$-scheme $V$ is \emph{good} if the following two conditions are met:
\begin{itemize}
\item There exists a $V$-N\'eron model $N_V$ for $J_{U\times_S V}$. (The notation is unambiguous since if $X_U$ has a N\'eron model $N$, then $N\times_S V$ is the $V$-N\'eron model of $X_{U\times_S V}$.)
\item For any \'etale map $V' \to V$ and any refinement $X' \to X_{V'}$, the canonical map $\pic^{\on{tot}0}_{X'/V'}/E'^{\,\mathrm{\acute{e}tale}} \to N_{V'}$ is an open immersion,  where $E'$ is the closure of the unit section in $\pic^{\on{tot}0}_{X'/V'}$.
\end{itemize}
We say that $V$ has the property $\mathcal P$ if there exists a good open subscheme $W$ of $V$ such that $V$ is an admissible neighbourhood of every point of $V\backslash W$. If $s$ is a point of $S$, we say that $s$ is good (resp.\ has $\mathcal P$) if some \'etale neighbourhood of $s$ is good (resp.\ has $\mathcal P$). Goodness and $\mathcal{P}$ can both be checked locally on $S$ for the \'etale topology. Therefore, the theorem reduces to the following two claims: goodness and $\mathcal{P}$ are equivalent, and any point of $S$ has $\mathcal{P}$. We will now prove these claims, in order. Throughout the rest of the proof, when $V$ is a smooth $S$-scheme, we will write $\sim_V$ for the equivalence relation on $V$ defined as in Lemma \ref{lemma:equivalence_classes_loc_constructible} (relative to $X_V/V$ or, equivalently, to any refinement of it).

Clearly goodness implies $\mathcal{P}$. We will show that $S$ is good, assuming it has $\mathcal P$. Goodness and $\mathcal{P}$ are local, so it suffices to pick a point $s\in S$ and show that, after shrinking $S$ at will to an arbitrarily small \'etale neighbourhood of $s$, $J$ has a N\'eron model $N$ and for any refinement $X' \to X$, the canonical map $P/E^{\mathrm{\acute{e}tale}} \to N$ is an open immersion, where $P=\pic^{\on{tot}0}_{X'/S}$ and $E$ is the closure of the unit section in $P$.

Let $W$ be a good open subscheme of $S$ such that $S$ is an admissible neighbourhood of every point of $F:=S\backslash W$. Shrinking $S$, we may assume that there are only finitely many equivalence classes $(C_i)_{i\in I}$ for $\sim_S$. By Proposition \ref{proposition:refinements_induce_open_immersions_of_Pics}, we may replace $X'$ with a refinement. Pick an index $j\in I$ such that $C_j$ meets $F$, and let $s'$ be a point of $F\cap C_j$. By Proposition \ref{proposition:resolutions}, we may assume that the singular points of $X'$ above $s'$ have prime thicknesses, which implies that the singular points of $X'$ mapping to $C_j$ have prime thicknesses by the definition of $\sim_S$. Iterating the process, we may assume that the thicknesses of all the singularities of $X'$ above $F$ are prime. Denote by $N_W$ the $W$-N\'eron model of $J_{U\times_S W}$. The canonical map
\[
(P/E^{\mathrm{\acute{e}tale}})\times_S W \to N_W
\]
is an open immersion since $W$ is good. Denote by $N$ the gluing of $P/E^{\mathrm{\acute{e}tale}}$ and $N_W$ along $(P/E^{\mathrm{\acute{e}tale}})\times_S W$ (the notation $N_W$ is unambiguous since $N\times_S W=N_W$). Then $N$ is a smooth $S$-model of $J$ with uniqueness in the N\'eron mapping property and with an open immersion $P/E^{\mathrm{\acute{e}tale}} \to N$ restricting to the identity over~$U$. Therefore, in order to prove that $S$ is good, it suffices to show that for any smooth $S$-scheme $Y$, the restriction map
\[
\Hom_S(Y,N) \to \Hom_{U}(Y_U,N_U)
\]
is surjective. Pick some $f_U \in \Hom_U(Y_U,N_U)$. By uniqueness in the mapping property, it suffices to show that $f_U$ extends to an $S$-map $Y' \to N$ for a Zariski neighbourhood $Y'$ of a given point $y\in Y$. If $y$ is in~$Y_W$, this is clear. Otherwise, $y$ lands in~$F$ so the singularities of $X_Y$ above $y$ have prime thicknesses by Corollary~\ref{corollary:irred_of_etale_loc_ring_stable_under_smooth_maps}. In particular, the base change $X^y$ of $X_Y$ to $\spec(\O_{Y,y})$ is locally factorial by Lemma \ref{lemma:curves_with_prime_labels_are_loc_factorial}, so the line bundle on $X^y \times_S U$ corresponding to $f_U$ extends to a line bundle $\mathcal L$ on $X^y$. Then, Lemma \ref{lemma:line_bundles_on_loc_rings_extend_locally} provides an open neighbourhood $Y'$ of $y$ in $Y$ and a line bundle on $X_{Y'}$ extending $\mathcal L$, \textit{i.e.}\ a morphism $Y' \to P$ extending $f_U$. We conclude by composing with $P \to N$.

We have shown that goodness and $\mathcal P$ are equivalent. Now, let $s$ be a point of $S$; we will prove that $s$ has~$\mathcal{P}$. For any \'etale morphism $V \to S$, the locally closed pieces of the equivalence classes of $\sim_V$ form a partition of $V$ into locally closed subsets. We write $n(V)$ for the number of pieces of this partition and $n(s)$ for the minimum of the $n(V)$ where $V$ ranges through the \'etale neighbourhoods of $s$ in $S$. By the local constructibility of the classes for $\sim_S$, we know that $n:=n(s)$ is finite. We will show that $s$ has $\mathcal P$ by induction on $n$. Shrinking $S$, we may assume that $n(S)=n$. If $n=1$, then $X/S$ is smooth, so $S$ is good and we are done. Otherwise, denote by $F_0$ the equivalence class of $s$ for $\sim_S$, and let $F$ be the locally closed piece of~$F_0$ containing $s$. By the minimality of $n(S)$, $F$ is closed in $S$. Therefore, the open subscheme $W=S\backslash F$ of~$S$ is such that $n(W)=n(S)-1$, and by induction $W$ has $\mathcal{P}$; \textit{i.e.}\ $W$ is good. Let $(V,v)$ be an admissible neighbourhood of $s$ in $S$. Shrinking $V$, we may assume that all the points of $V \backslash W_V$ are $\sim_V$-equivalent, from which it follows that $V$ is an admissible neighbourhood of all of them. Then $V$ has $\mathcal P$, which concludes the proof.
\end{proof}

\begin{rem}\label{remark:strict_log_jac_vs_pic_aggregate}
In~\cite{HMOPModelsJacobians}, the authors describe the \emph{strict logarithmic Jacobian} of a logarithmic curve. They show that when $X/S$ is a nodal curve over a toroidal variety, smooth over the complement $U$ of the boundary divisor,  there are canonical log structures on $X$ and $S$ such that the strict logarithmic Jacobian is the N\'eron model of $X_U$. This gives a moduli interpretation in logarithmic geometry for the N\'eron model constructed in Theorem \ref{theorem:NMs_Jac} when $U$ is the complement in $S$ of a divisor with normal crossings. A similar interpretation can be given when the discriminant locus of $X/S$ is arbitrary. Indeed, let $M_S$ be the \'etale subsheaf of monoids of $\O_S$ whose \'etale stalks are generated by the units and by the prime factors of the singular ideals of $X$. Let $M_X$ be the submonoid of $\O_X$ whose \'etale stalk at a geometric point $x\to X$ above a geometric point $s\to S$ is:
\begin{itemize}
\item The submonoid of $\O_{X,x}^{\,\on{\acute{e}t}}$ spanned by $(\O_{X,x}^{\,\on{\acute{e}t}})^\times$ and $M_{S,s}$ if $x$ is smooth over $S$; 
\item The submonoid of $\O_{X,x}^{\,\on{\acute{e}t}}$ spanned by $(\O_{X,x}^{\,\on{\acute{e}t}})^\times$, $M_{S,s}$ and local parameters for the two branches of $X/S$ at $x$ if $x$ is singular.
\end{itemize}
Then, $M_S\to \O_S$ and $M_X\to \O_X$ are logarithmic structures in the sense of~\cite{Kato}, but they do not necessarily admit \'etale-local charts (\textit{cf.} Example \ref{example:irred_not_etale_irred}). Therefore, $(X,M_X)$ and $(S,M_S)$ are not logarithmic schemes in the usual sense, and we cannot apply directly the results of~\cite{HMOPModelsJacobians} (although many of the arguments remain valid in our context). However, replacing $U$ with the maximal open subscheme of $S$ over which $X$ is smooth, \textit{i.e.}\ on which $M_S=\O_S^\times$, we find that the groupification $M_X^{\mathrm{gp}}$ coincides with the direct image of $\O_{X_U}^\times$ in $X$. Hence, two isomorphism classes of $M_X^{\mathrm{gp}}$-torsors which coincide over $X_U$ are equal. Write $H^1(X,M_X^{\mathrm{gp}})^\dagger$ for the subgroup of $H^1(X,M_X^{\mathrm{gp}})$ consisting of torsors which, locally on $S$, come from a line bundle on a refinement of $X$. Then it follows that $H^1(X,M_X^{\mathrm{gp}})^\dagger=\Hom(U,\pic^0_{X_U/U})$. Combining this with the fact that the formation of $M_S$ and $M_X$ commutes with smooth base change by Corollary \ref{corollary:irred_of_etale_loc_ring_stable_under_smooth_maps}, we find that
\begin{align*}
(\sch/S)^{\on{op}} & \to \set \\
T & \mapsto H^1(X_T,M_{X_T}^{\mathrm{gp}})^\dagger
\end{align*}
is the Hom functor of the N\'eron model of $\pic^0_{X_U/U}$. As in~\cite{MolchoWise} or~\cite{HMOPModelsJacobians}, we can describe explicitly $H^1(X,M_X^{\mathrm{gp}})^\dagger$ as the subgroup of $H^1(X,M_X^{\mathrm{gp}})$ consisting of torsors satisfying a certain condition that can be expressed in terms of dual graphs, the condition of \emph{bounded monodromy}.
\end{rem}

\subsection{A criterion for separatedness}

In this subsection, we exhibit a necessary and sufficient combinatorial condition for the N\'eron model of Theorem \ref{theorem:NMs_Jac} to be separated, closely related to the alignment condition of~\cite{Holmes}.

\begin{defi}\label{c-strict alignment}
Let $\Gamma$ be a graph labelled by a monoid $M$, written multiplicatively. Following~\cite[Definition 2.11]{Holmes}, we say that $\Gamma$ is \textit{aligned} when for every cycle $\Gamma^0$ in $\Gamma$, all the labels figuring in $\Gamma^0$ are positive powers of the same element $l$ of $M$. When $S$ is a smooth-factorial scheme and $s\to S$ a geometric point, we say that a nodal curve $X/S$ is \textit{aligned at~$s$} when its dual graph $\Gamma_s$ at $s$ is aligned. We say $X/S$ is \emph{aligned} if it is aligned at every geometric point of $S$.

	If $M$ is the free commutative monoid over a set of generators $G$, we say that $\Gamma$ is \textit{strictly aligned} if $l$ can be chosen in $G$. We say that $X/S$ is \textit{strictly aligned at $s$} if $\Gamma_s$ is strictly aligned (here $G$ is the set of principal prime ideals of $\O^{\on{\acute{e}t}}_{S,s}$). We say that $X/S$ is \emph{strictly aligned} if it is strictly aligned at every geometric point of $S$.
\end{defi}

\begin{ex}
Over $S=\spec\C[[u,v]]$, at the closed point, among the  following three dual graphs, the first is not aligned, the second is aligned but not strictly, and the third is strictly aligned.

\begin{center}
\begin{tikzpicture}
  \SetGraphUnit{4}
  \Vertex{B}
  \WE(B){A}
  \Edge[label = $(uv^ 2)$](A)(B)
  \Edge[label = $(u^ 2v^ 2)$](B)(A)
\end{tikzpicture}
\end{center}

\begin{center}
\begin{tikzpicture}
  \SetGraphUnit{4}
  \Vertex{A}
  \Loop[style={}, dist = 3cm, dir = EA, label = $(uv)$](A.north)
\end{tikzpicture}
\end{center}

\begin{center}
\begin{tikzpicture}
  \SetGraphUnit{4}
  \Vertex{B}
  \WE(B){A}
  \Edge[label = $(u^2)$](A)(B)
  \Edge[label = $(u^3)$](B)(A)
  \Loop[style={}, dist=3cm, dir=NO, label=$(v^3)$](A.west)
\end{tikzpicture}
\end{center}
\end{ex}

\begin{ex}
In the setting of Example \ref{example:irred_not_etale_irred}, the curve $X/S$ is strictly aligned at the closed point $s$ of $S$ (since $S$ is strictly local and the dual graph at $s$ is a loop with prime label), but $X/S$ is not aligned.
\end{ex}

\begin{prop}\label{alignement et platitude de E}
	Let $S$ be a regular scheme, $U\subset S$ a dense open and $X/S$ a nodal curve, smooth over $U$. Let $P=\pic^{\on{tot}0}_{X/S}$, and let $E$ be the scheme-theoretic closure in $P$ of its unit section. Then the following conditions are equivalent:
	
	\begin{enumerate}
		\item $E/S$ is flat.
		\item $E/S$ is \'etale.
		\item $X/S$ is aligned.
	\end{enumerate}
\end{prop}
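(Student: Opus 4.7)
The implication $(2) \Rightarrow (1)$ is immediate. Since the formation of $P$ and of $E$ commutes with smooth (hence \'etale) base change, and alignment is an \'etale-local property on $S$, all three conditions can be verified \'etale-locally. Using Corollary \ref{corollary:curves_are_QS_over_some_etale_cover} and Lemma \ref{lemma:orientations_exist_locally}, I may therefore assume $S = \spec R$ is strictly local with closed point $s$, and that $X/S$ is quasisplit with a chosen global orientation.

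The strategy is to analyze $E$ fiberwise over $S$ via the short exact sequence
\[
0 \ra \pic^0_{X/S} \ra P \ra \Phi \ra 0,
\]
where $\Phi$ is the sheaf of component groups. The group $\Phi_s$ is described in terms of the dual graph $\Gamma_s$: up to torsion it is $H_1(\Gamma_s, \Z)$. For each closed cycle $\gamma = (e_1, \ldots, e_n)$ of $\Gamma_s$ with edge thicknesses $t_1, \ldots, t_n \in \overline{R}$, I would construct a \emph{twister} line bundle $\mathcal L_\gamma$ by gluing copies of $\O_X$ along the two branches at each node $e_i$ with transition functions derived from $t_i$ and the orientation (as in Lemma \ref{lemma:branches_at_a_singularity_are_Cartier_disjoint}). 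The class of $\mathcal L_\gamma$ in $\Phi_s$ generates the contribution of $\gamma$ to $H_1(\Gamma_s, \Z)$. The gluing initially produces a line bundle only on a formal neighbourhood of $X_s$; it descends to a genuine line bundle on $X$ precisely when the $t_i$ are multiplicatively coherent, i.e., all positive powers of a single element $\ell \in \overline{R}$ (this is exactly the alignment condition for $\gamma$, by unique factorization in the strictly local ring $R$ as per Lemma \ref{lemme les irreductibles au complete sont les irreductibles}).

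For $(3) \Rightarrow (2)$: If $X/S$ is aligned, every cycle $\gamma$ admits a global twister $\mathcal L_\gamma$ on $X$, trivial on $X_U$; the corresponding class is therefore in the kernel of $P \ra P_U$, i.e.\ in $E$. As $\mathcal L_\gamma$ and the trivial bundle agree over $U$ and $U$ is scheme-theoretically dense, the section of $P$ determined by $\mathcal L_\gamma$ coincides with the unit section; consequently the closure $E$ is set-theoretically (and scheme-theoretically, by reducedness of the closure of a reduced subscheme of a reduced ambient space) the image of the unit section $e \colon S \ra P$. Hence $E \ra S$ is an isomorphism, in particular \'etale. For $(1) \Rightarrow (3)$: if some cycle $\gamma$ at $s$ is not aligned, the twister construction fails globally and only yields a line bundle after restriction to the proper closed subscheme of $S$ where the relevant ratios $t_i / t_j$ become trivial. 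This produces a component of $E$ whose fiber over $s$ has strictly positive dimension, violating the constancy of fiber dimension required for a flat morphism of locally finite presentation, since the generic fiber $E_U \ra U$ is $0$-dimensional.

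The main obstacle is the rigorous construction and analysis of the twisters $\mathcal L_\gamma$: one must handle loops (where the two branches at a node meet on a single irreducible component, requiring a careful treatment of orientation) and ensure that the twisters from distinct cycles, together with the group structure of $P$, really exhaust the closure $E$ \'etale-locally. This is essentially the content of Section 5 of \cite{Holmes}; the arguments there carry over to the smooth-factorial setting with only minor adjustments based on the primality results of Section \ref{section_intro2}.
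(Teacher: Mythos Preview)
The paper's own proof is a one-line citation to \cite[Theorem 5.17]{Holmes}; it does not reprove the result. Your final sentence already acknowledges this, so in spirit you and the paper agree: the statement is Holmes' theorem. However, the sketch you add on top contains a genuine error in the step $(3)\Rightarrow(2)$.

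You write that because $\mathcal L_\gamma$ and the trivial bundle agree over $U$, ``the section of $P$ determined by $\mathcal L_\gamma$ coincides with the unit section'', and hence $E\to S$ is an isomorphism. This is false: agreement over the dense open $U$ only shows that the $S$-section $[\mathcal L_\gamma]\colon S\to P$ lands in the closure $E$ of the unit section, not that it \emph{equals} the unit section. In fact $[\mathcal L_\gamma]$ is typically a nontrivial element of $E(S)$ (its image in $\Phi_s$ is the class of $\gamma$ in $H_1(\Gamma_s,\Z)$, which is nonzero for a genuine cycle). So $E$ is not the unit section; rather, under alignment, $E$ is a disjoint union of such twister sections and one must argue that (i) these sections exhaust $E$ and (ii) they are pairwise disjoint, so that $E\to S$ is \'etale. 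That is the actual content of the argument in \cite{Holmes}, and your sketch skips it by drawing the wrong conclusion from density of $U$.

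Your outline of $(1)\Rightarrow(3)$ is closer to the mark but still only a heuristic: saying the twister ``only yields a line bundle after restriction to a proper closed subscheme'' does not by itself produce a component of $E$ with positive-dimensional fibre; one needs the more careful fibre-dimension analysis carried out in \cite{Holmes}. Since the paper simply cites that reference, the cleanest fix is to do the same and drop the sketch, or to correct the $(3)\Rightarrow(2)$ paragraph to reflect that $E$ is \'etale of (possibly large) degree rather than an isomorphism.
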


\begin{proof}
This is~\cite[Theorem 5.17]{Holmes}.
\end{proof}

\begin{thm}\label{theorem:separatedness_of_NM_jac_iff_strictly_aligned}
Let $S$ be a regular and excellent scheme, $U\subset S$ a dense open subscheme and $X/S$ a nodal curve, smooth over $U$. Denote by $J$ the Jacobian of $X_U/U$. Then, the $S$-N\'eron model $N$ of $J$ exhibited in Theorem~\ref{theorem:NMs_Jac} is separated if and only if $X/S$ is strictly aligned.
\end{thm}

\begin{proof}
First, suppose that $N$ is separated. Let $s \to S$ be a geometric point; we will show that $X/S$ is aligned at~$s$. By Corollary \ref{corollaire le NM passe aux limites d'algebres lisses}, this may be checked assuming that $S$ is strictly local with closed point $s$. Proposition~\ref{proposition:resolutions} provides a refinement $X' \to X$ such that every singularity in the closed fibre of $X'/S$ has prime thickness. Let $\Gamma, \Gamma'$ be the dual graphs at $s$ of $X$ and $X'$, respectively. The closure of the unit section in $\pic^{\on{tot}0}_{X'/S}$ is \'etale over $S$ by~\cite[Theorem 6.2]{Holmes}. Hence, $\Gamma'$ is aligned at $s$ by Proposition \ref{alignement et platitude de E}, and even strictly aligned since its labels are prime. Since $\Gamma'$ refines $\Gamma$ (\textit{cf.} Lemma \ref{structure des (C,delta)-blowups}), it follows that $\Gamma$ is strictly aligned as well.

Conversely, suppose that $X$ is strictly aligned. We will show that $N\to S$ is separated. This may be done assuming that $S=\spec R$ is strictly local with closed point $s$. Replacing $X$ with a refinement, we may assume that every singular point of $X_s$ has prime thickness by Proposition \ref{proposition:resolutions}. Put $P=\pic^{\on{tot}0}_{X/S}$, and let~$E$ be the scheme-theoretic closure of the unit section in $P$. Then $E$ is \'etale over $S$ by Proposition \ref{alignement et platitude de E}, and there is a canonical open immersion $P/E \to N$ by Theorem \ref{theorem:NMs_Jac}. Since $P/E$ is separated, it suffices to show that this open immersion is surjective. This can be checked on \'etale stalks over $S$: let $t \to S$ be a geometric point and $T=\spec R'$, where $R'=\O_{S,t}^{\,\on{\acute{e}t}}$; it suffices to show that $P/E \to N$ is surjective on $T$-points. Proposition~\ref{proposition:resolutions} provides a refinement $X' \to X_T$ such that the singular points of $X'$ above $t$ have prime thicknesses. Put $P'=\pic^{\on{tot}0}_{X'/T}$. By part~\eqref{thm7.6-3} of Theorem \ref{theorem:NMs_Jac}, the map $P'(T) \to N(T)$ is surjective. Therefore, by the "in addition" part of Proposition \ref{proposition:refinements_induce_open_immersions_of_Pics}, it suffices to show that $X' \to X_T$ is an isomorphism above every singular point of $X_t$ which is non-disconnecting in $X_t$. Let $x$ be such a point. By Lemma \ref{structure des (C,delta)-blowups}, it is enough to prove that the thickness $T\in\overline{R'}$ of $x$ is prime. Let $y$ be the singular point of $X_s$ specializing $x$. Let $\Delta\in R$ be a lift of the thickness of $y$. Then $T$ is the principal ideal of $R'$ spanned by $\Delta$. The ring $R'/(\Delta)$ is reduced as a filtered colimit of \'etale $R/(\Delta)$-algebras, so $\Delta$ is square-free in $R'$. But $\Delta$ is a prime power in~$R'$ since $X$ is strictly aligned at $t$, so it is prime and we are done.
\end{proof}

\begin{rem}
As mentioned in the proof of~\cite[Theorem 5.17]{Holmes}, Proposition \ref{alignement et platitude de E} should still hold if we only require $S$ to be smooth-factorial instead of regular. If so, our proof of Theorem \ref{theorem:separatedness_of_NM_jac_iff_strictly_aligned} remains valid when~$S$ is just an excellent and smooth-factorial scheme.
\end{rem}


\newcommand{\etalchar}[1]{$^{#1}$}

\end{document}